\numberwithin{equation}{section} 
\newtheoremstyle{italic}
{5pt}
{5pt}
{\itshape}
{}
{}
{}
{.5em}
{\bfseries{\thmname{#1}~\thmnumber{#2}.}\thmnote{~\textnormal{(#3)}}}
\newtheoremstyle{italic_borrowed}
{5pt}
{5pt}
{\itshape}
{}
{}
{}
{.5em}
{\bfseries{\thmname{#1}~\thmnumber{#2}.}\thmnote{~\textnormal{[#3]}}}
\newtheoremstyle{upright}
{5pt}
{5pt}
{\upshape}
{}
{\bfseries}
{}
{.5em}
{\bfseries{\thmname{#1}~\thmnumber{#2}.}\thmnote{~\textnormal{(\textit{#3}\textrm{)}}}}
\theoremstyle{italic}
\newtheorem{theorem}{Theorem}[section]
\newtheorem{lemma}[theorem]{Lemma}
\newtheorem{corollary}[theorem]{Corollary}
\theoremstyle{italic_borrowed}
\theoremstyle{upright}
\newtheorem{remark}[theorem]{Remark}
\newcommand\blfootnote[1]{%
  \begingroup%
  \renewcommand\thefootnote{}\footnote{#1}%
  \addtocounter{footnote}{-1}%
  \endgroup%
}%
\newcommand{\dconv}{\overset{d}{\to}}
\newcommand{\euler}{\mathrm{e}}
\newcommand{\BigO}{\mathrm{O}}
\newcommand{\argsup}{\arg\sup} 
\newcommand{\dinf}{\textup{d}} 
\newcommand{\Nat}{\mathbb{N}} 
\newcommand{\Real}{\mathbb{R}} 
\NewDocumentCommand \E { m g }{%
    \IfNoValueTF{#2}
        {\mathbb{E}[#1]}
        {\mathbb{E}_{#1}[#2]} 
    }%
\NewDocumentCommand \Esc { m g }{%
    \IfNoValueTF{#2}
        {\mathbb{E}\left[#1\right]}
        {\mathbb{E}_{#1}\!\!\left[#2\right]} 
    }%
\NewDocumentCommand \Efxd { m g }{%
    \IfNoValueTF{#2}
        {\mathbb{E}\Bigl[#1\Bigr]}
        {\mathbb{E}_{#1}\!\Bigl[#2\Bigr]} 
    }%
\NewDocumentCommand \Prob { m g }{%
    \IfNoValueTF{#2}
        {\mathbb{P}(#1)}
        {\mathbb{P}_{#1}(#2)} 
    }%
\NewDocumentCommand \Probfxd { m g }{%
    \IfNoValueTF{#2}
        {\mathbb{P}\Bigl(#1\Bigr)}
        {\mathbb{P}_{#1}\!\Bigl(#2\Bigr)} 
    }%
\newcommand{\bld}[1]{\mathbf{#1}} 
\newcommand{\oneb}{\bld{1}} 
\NewDocumentCommand \eb { m g }{%
    \IfNoValueTF{#2}
        {\bld{e}_{#1}}
        {\bld{e}^{(#1)}_{#2}} 
    }%
\NewDocumentCommand \zerob { g }{%
    \IfNoValueTF{#1}
        {\bld{0}}
        {\bld{0}^{(#1)}} 
    }%
\newcommand{\la}{\lambda} 
\newcommand{\be}{\beta}
\newcommand{\ga}{\gamma}
\newcommand{\La}{\Lambda}
\newcommand{\seq}{^{(s)}} 
\newcommand{\lseq}{\la\seq}
\newcommand{\pd}{\Prob{W > 0}}
\newcommand{\pdseq}{\Prob{W\seq > 0}}
\newcommand{\X}[1]{X(#1)}
\newcommand{\Xseq}[1]{X\seq(#1)}
\newcommand{\Dseq}[1]{D\seq(#1)}
\newcommand{\Y}[1]{Y(#1)}
\newcommand{\Yseq}[1]{Y\seq(#1)}
\newcommand{\F}{_{\scriptscriptstyle \textup{H}}} 
\newcommand{\mF}{\mu\F} 
\newcommand{\mFseq}{\mF\seq} 
\newcommand{\rF}{\rho\F} 
\newcommand{\rFseq}{\rF\seq} 
\newcommand{\pdF}{\Prob{W\F > 0}} 
\newcommand{\XFseq}[1]{X\F\seq(#1)} 
\newcommand{\DF}[1]{D\F(#1)} 
\newcommand{\DFseq}[1]{D\F\seq(#1)} 
\newcommand{\pdfF}[1]{f_{D\F}(#1)} 
\newcommand{\pdfCF}{C\F} 
\newcommand{\SSFseq}{E\F\seq} 
\newcommand{\meanDF}{m\F(x)} 
\newcommand{\meanDFseq}{m\F\seq(x)} 
\newcommand{\varDF}{\sigma\F^2(x)} 
\newcommand{\varDFseq}{\bigl( \sigma\F^2 \bigr)\seq(x)} 
\newcommand{\SL}{_{\scriptscriptstyle \textup{L}}} 
\newcommand{\mS}{\mu\SL} 
\newcommand{\rS}{\rho\SL} 
\newcommand{\rSseq}{\rS\seq} 
\newcommand{\pdS}{\Prob{W\SL > 0}} 
\newcommand{\XSseq}[1]{X\SL\seq(#1)} 
\newcommand{\DS}[1]{D\SL(#1)} 
\newcommand{\DSseq}[1]{D\SL\seq(#1)} 
\newcommand{\pdfS}[1]{f_{D\SL}(#1)} 
\newcommand{\pdfCS}{C\SL} 
\newcommand{\meanDS}{m\SL(x)} 
\newcommand{\meanDSseq}{m\SL\seq(x)} 
\newcommand{\varDS}{\sigma\SL^2(x)} 
\newcommand{\varDSseq}{\bigl( \sigma\SL^2 \bigr)\seq(x)} 
\newcommand{\BS}{B\SL} 
\newcommand{\WS}{W\SL} 
\newcommand{\QEDl}{\be} 
\newcommand{\QEDmF}{\ga} 
\newcommand{\I}{I} 
\title{The snowball effect of customer slowdown in critical many-server systems}%
\author{Jori Selen\footnotemark[1] \footnotemark[2], Ivo J.B.F. Adan\footnotemark[1] \footnotemark[2], Vidyadhar G. Kulkarni\footnotemark[3], Johan S.H. van Leeuwaarden\footnotemark[2]}%
\begin{document}%

\maketitle%
\renewcommand{\thefootnote}{\fnsymbol{footnote}}%
\footnotetext[1]{Department of Mechanical Engineering, Eindhoven University of Technology, The Netherlands}%
\footnotetext[2]{Department of Mathematics and Computer Science, Eindhoven University of Technology, The Netherlands}%
\footnotetext[3]{Department of Statistics and Operations Research, University of North Carolina, Chapel Hill}
\renewcommand{\thefootnote}{\arabic{footnote}}%
\blfootnote{E-mail address: {\tt j.selen@tue.nl}}%
\renewcommand{\thefootnote}{\arabic{footnote}} \setcounter{footnote}{0}%

\begin{abstract}%
Customer slowdown describes the phenomenon that a customer's service requirement increases with experienced delay. In healthcare settings, there is substantial empirical evidence for slowdown, particularly when a patient's delay exceeds a certain threshold. For such threshold slowdown situations, we design and analyze a many-server system that leads to a two-dimensional Markov process. Analysis of this system leads to insights into the potentially detrimental effects of slowdown, especially in heavy-traffic conditions. We quantify the consequences of underprovisioning due to neglecting slowdown, demonstrate the presence of a subtle bistable system behavior, and discuss in detail the snowball effect: A delayed customer has an increased service requirement, causing longer delays for other customers, who in turn due to slowdown might require longer service times.
\end{abstract}%


\section{Introduction}%
\label{sec:introduction}%

The phenomenon of customer slowdown describes the fact that a customer's service requirement increases with the customer's experienced delay. While the operations management literature is largely built on the assumption that service times are independent of delay, a growing number of empirical studies, predominantly in healthcare settings, provide evidence for situations where slowdown occurs. This empirical evidence calls for the development of stochastic models that take into account slowdown, in order to not only assess its impact on the performance of service operations, but also to gain understanding of the fundamental changes that slowdown brings to system behavior.

A large body within the healthcare operations literature investigates the impact of workload on service times of patients. A canonical example in this domain is the admission of patients to the intensive care unit (ICU). There is substantial empirical evidence for slowdown in such settings: delays in receiving appropriate care can result in adverse effects such as an increased length of stay in the ICU \cite{Empirical_Chalfin2007,Slowdown_Chan2013,Empirical_Chan2008,Empirical_Renaud2009,Empirical_Richardson2002,Empirical_Siegmeth2005}. Since ICUs are typically heavily used and subject to unforeseen circumstances, delays in admitting patients are the rule rather than the exception, which makes the slowdown effect potentially threatening. A delayed patient that requires a longer service time will increase the overall workload of the system, therefore causing longer delays for other patients, who in turn due to slowdown might require longer service. This triggers a \textit{snowball} effect, with an impact that is hard to assess without having a detailed understanding of the global system behavior that takes into account the subtle dependencies among customers due to slowdown. Particularly when a system like an ICU is designed to operate under heavy-traffic conditions, the neglect of slowdown might lead to underprovisioning and severe performance degradation.

Critical care systems such as an ICU are typically modeled as multi-server systems that operate in heavy-traffic regimes \cite{HealthcareQueuingTheory_Armony2011,HealthcareQueuingTheory_Green2006}. The patients are the customers, the beds are the servers, and the performance analysis of the multi-server systems gives insight into the patient flow. We shall consider a Markovian multi-server system with the additional feature of slowdown. A detailed analysis of this system gives insight into the key features of slowdown, in particular when compared against multi-server systems without slowdown.


\subsection{A threshold slowdown system}%
\label{subsec:modeling_slowdown}%

Slowdown can be modeled as a non-increasing function $\mu(\cdot)$ that describes the rate of service as a function of the queue length seen upon arrival. That is, a customer meeting $n$ customers upon arrival will receive service with rate $\mu(n)$, regardless of arrivals and departures after the customer has joined the system. Note that, since the number of customers seen on arrival can be translated into an expected delay, the service rate can also be interpreted as a non-increasing function of the expected delay. Assuming a service rate that is a function of the state of the system, leads to a so-called state-dependent queueing system.

The majority of the empirical studies on slowdown has focused on a threshold slowdown: if a patient's delay surpasses a certain threshold, he will receive a longer service time and otherwise he receives a service of regular length. In terms of the slowdown function, this means that $\mu(n) = \mF$ if $n \le N$ and $\mu(n) = \mS$ if $n > N$ with $\mF > \mS$. Note that the expected delay is translated to a number of customers $n$ met on arrival and compared against the threshold $N$. The definition of the threshold varies across different medical conditions and situations. In \cite{Empirical_Chalfin2007} it is argued that a critically ill patient awaiting transfer from the emergency department to the ICU is labeled as \textit{delayed} if the patient has waited longer than 6 hours. Delayed patients on average have an ICU length of stay that is 1 full day longer than the non-delayed average length of stay. Similar conclusions are drawn in \cite{Empirical_Richardson2002} for the same situation in different hospitals. However, \cite{Empirical_Richardson2002} uses a threshold of 8 hours. Both studies \cite{Empirical_Chalfin2007,Empirical_Richardson2002} establish a strong correlation between the delay a patient experiences in receiving an assigned bed and the ICU length of stay. Depending on the medical condition, the delay threshold can be in the order of minutes, such as for cardiac arrest patients \cite{Empirical_Chan2008}, hours, as seen in \cite{Empirical_Chalfin2007,Empirical_Richardson2002}, or even days, such as the 2 day delay in receiving surgery \cite{Empirical_Siegmeth2005}; or a 3 day threshold of delay for pneumonia patients \cite{Empirical_Renaud2009}. An encompassing study is performed in \cite{Slowdown_Chan2013}, where it is empirically verified that the slowdown effect is prevalent across multiple hospitals and patient conditions.

We shall adopt the model in \cite{Slowdown_Chan2013}, which is a multi-server model with a threshold service rate function $\mu(\cdot)$. Customers arrive according to a Poisson process with rate $\la$, have an exponential service requirement, and are served by $s$ servers. Due to the threshold, we then distinguish between two types of customers: those who were taken into service immediately upon arrival (non-delayed) and those who have experienced delay (delayed). We set the threshold to $N = s$ so that non-delayed customers are served with a high service rate $\mF$ and delayed customers are served with a low service rate $\mS$ with $\mF > \mS$. Indeed, in that case, delays cause a longer service time. Define the two-dimensional Markov process $(\X{t},\Y{t})$, with $\X{t} \in \Nat_0$ the total number of customers in the system at time $t$, and $\Y{t} \in \{0,\ldots,s\}$ the number of non-delayed customers in service at time $t$. Denote $\rF = \la / (s\mF)$ and $\rS = \la / (s\mS)$. The system is stable when $\rS < 1$. When $\mF = \mS$ the model reduces to the standard $M/M/s$ system. The load of the slowdown system is described by $\rho = (1 - \Prob{W > 0})\rF + \Prob{W > 0}\rS$, where $W$ is the stationary waiting time, so that $\Prob{W > 0}$ is the delay probability. Figure~\ref{fig:trd_abstract} displays the state space and the transition rate diagram. In \cite{Slowdown_Chan2013}, approximations are derived for key performance indicators that give insight into the slowdown effect. Based on parameter values calibrated from real ICU dataflows, the approximations in \cite{Slowdown_Chan2013} indicate that the slowdown effect can be substantial, and should not be ignored in critical care systems that operate in heavy traffic.

\begin{figure}%
\centering%
\includegraphics{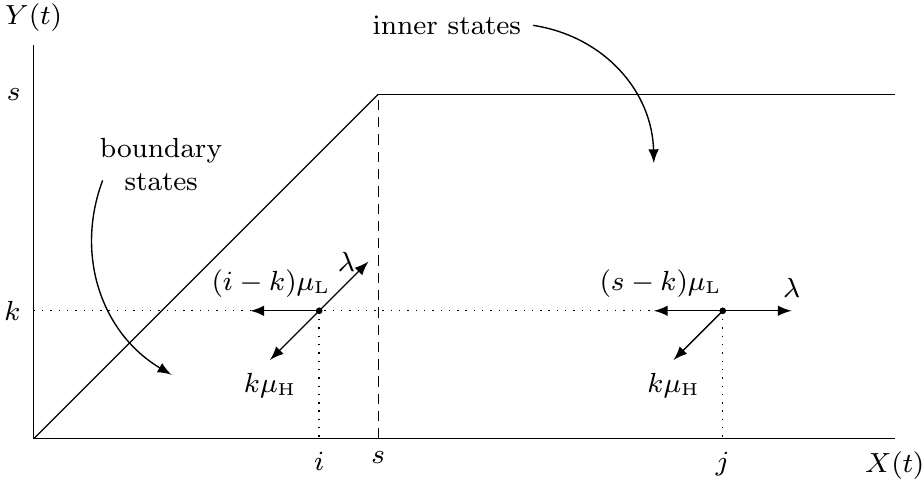}
\caption{Transition rate diagram and state space of the threshold slowdown system.}%
\label{fig:trd_abstract}%
\end{figure}%

This two-dimensional Markov process can be used to investigate the impact of slowdown, both qualitatively and quantitatively, in particular in comparison with the widely applied $M/M/s$ system (which neglects slowdown). While the focus in \cite{Slowdown_Chan2013} lies on approximations for small to moderate-sized systems (ICUs of 6 and 15 beds), we focus on exact and asymptotic results, both for finite $s$ and the regime $s \to \infty$ and $\rho \uparrow 1$.  With exact results we refer to determining the stationary distribution of the Markov process using a numerically stable algorithm. This algorithm allows us to compute the exact two-dimensional stationary distribution for not only small but also large systems. Asymptotic results give rise to accurate approximations for the dimensioning of large systems in heavy traffic. Particularly, we are interested in the effect of slowdown in the Quality-and-Efficiency driven (QED) regime \cite{QED_HalfinWhitt1981}. As it turns out, the way to establish non-degenerate limiting behavior for a multi-server system with slowdown in a QED-type regime is by letting $\rS$ approach 1, and $\mF$ approach $\mS$ as $s \to \infty$. We find that this scaling window is such that the probability of delay converges to a value that lies strictly in the interval $(0,1)$, which is a manifestation of non-degenerate limiting behavior. As pointed out in \cite{Slowdown_Chan2013}, deriving exact results becomes mathematically challenging because determining the stationary distribution of the Markov process involves high-dimensional matrix inversion. To relieve this computational burden of a large state space (particularly for large $s$), we exploit the fact that the Markov process has a block diagonal structure in the \textit{inner} states (states with more than $s$ customers in the system), which allows for an exact solution using matrix-analytic techniques. This technique typically relies on iterative algorithms that solve a non-linear matrix equation. For our model, we are able to find an exact solution for this matrix equation, which then immediately renders the problem of computing the stationary probabilities of the inner states computationally tractable, also for large $s$, see Section~\ref{subsec:inner_equations}. What remains is the computation of the stationary probabilities of the \textit{boundary} states (states with $s$ or less customers in the system). We introduce a novel approach that computes the exact stationary probabilities of the boundary states by exploiting the transition structure and by introducing first-passage probabilities. A detailed description of this approach can be found in Section~\ref{subsec:boundary_equations}.


\subsection{On the relation with operator slowdown}%
\label{subsec:relation_operator_slowdown}%

Slowdown can refer to \textit{customer} slowdown and \textit{operator} slowdown. Customer slowdown refers to an increase of a customer's service requirement, caused by the delay experienced by that customer. Operator slowdown refers to a service rate that decreases with the workload present in the system. Operator slowdown usually occurs in large service systems, such as call centers, due to fatigued operators \cite{Slowdown_Dong2014}. However, it is also common in medical applications under high workload, where care providers have to multitask and share (now crowded) central resources such as computer terminals \cite{Empirical_Batt2012}. The key difference is that customer slowdown starts with an individual delayed customer, and affects all customers behind this customer, while a decreased service rate in operator slowdown affects all customers that are in service. Customer slowdown therefore typically requires a more detailed state description, making it harder to analyze than operator slowdown. In this paper we indeed focus on customer slowdown, but we make comparisons with operator slowdown in several places.

Operator slowdown under Markovian assumptions leads to a one-dimensional Markov process which is more tractable than our two-dimensional process and is amenable to fluid analysis. In \cite{Slowdown_Dong2014} an $M/M/s$-type model with operator slowdown is investigated. Additional properties in \cite{Slowdown_Dong2014} are customer abandonments and state-dependent service rates. We make a comparison with \cite{Slowdown_Dong2014} by extending our base model to also include customer abandonments in Section~\ref{subsec:bistability}. Both slowdown models exhibit a bistable behavior in which the models alternate between two dominant regions. For the customer slowdown model, however, this behavior is more subtle than for the operator slowdown model (see Section~\ref{subsec:bistability}).

Both customer and operator slowdown fall into the broad category of queueing systems with state-dependent service rates, like for instance an $M/G/1$ system with state-dependent service rates \cite{StateDependent_Harris1967}. In \cite{StateDependent_Bekker2006}, the optimal admission policy is studied for an $M/G/1$ system with service rates that increase with the workload below a certain threshold and decrease with the workload above this threshold. State-dependent queueing systems also arise when arrival and/or service rates are dynamically controlled to minimize average cost per time unit, see e.g. \cite{StateDependent_Ata2006,StateDependent_George2001,StateDependent_Weber1987}. All these examples concern operator slowdown.


\subsection{Structure of the paper}%
\label{subsec:structure_paper}%

The paper is structured as follows. Based on a detailed analysis of the two-dimensional Markov process in Figure~\ref{fig:trd_abstract}, we identify three key features of threshold slowdown systems: severe performance degradation due to the snowball effect; a subtle bistable system behavior; and the existence of non-degenerate limiting behavior in a QED-type heavy-traffic regime. We discuss these three features in Section~\ref{sec:key_features}. The first two features were identified by using the stationary distribution of the two-dimensional Markov process. Section~\ref{sec:model} introduces the model in greater detail and Section~\ref{sec:obtaining_stationary_distribution} describes how we solve for its stationary distribution using matrix-analytic methods and some properties of regenerative processes. The QED-type heavy-traffic regime is outlined in Section~\ref{sec:scaling limits}. We conclude in Section~\ref{sec:conclusion} and present some supporting results in the appendix.


\section{Key features of threshold slowdown systems}%
\label{sec:key_features}%

Unless stated otherwise, we assume a stable system, i.e. $\rS < 1$.


\subsection{Performance degradation due to the snowball effect}%
\label{subsec:performance_degradation}%

We first present a detailed description of the snowball effect caused by slowdown and then assess the adverse effects for system performance.

For explanation purposes, we refer with \textit{busy} periods and \textit{idle} periods to the excursions of the process $X(\cdot)$ above and at level $s$, and below level $s$, respectively. Hence, during busy periods, newly arriving customers will experience delay and are thus subject to slowdown. The snowball effect sets in each time a new busy period starts. An example sample path of idle and busy periods is given in Figure~\ref{fig:sample_path_queue_length}, where we plot the total number of customers in the system $L(t)$ at time $t$. Compared with an $M/M/s$ system without slowdown (with a high service rate $\mF$), the busy period in the time interval $(9300,10000)$ is relatively long, due to the slowdown of delayed customers that reinforces, through other delayed customers, the persistence of the busy period. Such busy periods are essentially equivalent to busy periods in an $M/M/s$ system with a low service rate $\mS$. These excursions during which congestion levels are high occur relatively frequently due to the snowball effect that triggers them, and this leads to severe performance degradation, particularly in heavy traffic.

\begin{figure}%
\centering%
\includegraphics{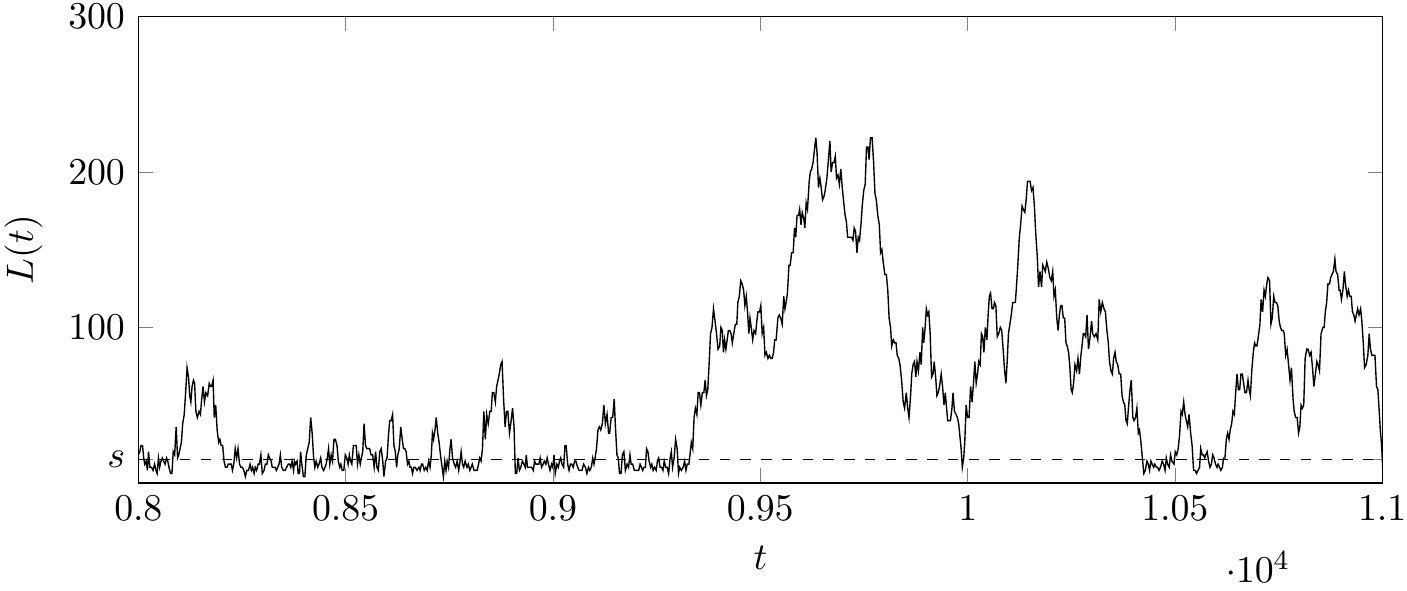}
\caption{Sample path of the total number of customers in the system $L(t)$ as a function of time $t$, for $s = 15$, $\la = s$ and loads $\rF = 0.7$ and $\rS = 0.98$.}%
\label{fig:sample_path_queue_length}%
\end{figure}%

This performance degradation is visible in Figure~\ref{fig:p_marginal}, which displays for the same parameter values as in Figure~\ref{fig:sample_path_queue_length} the stationary distribution of the total number of customers in the system $L$ of the threshold slowdown system. This stationary distribution is calculated using the numerical scheme that will be discussed in Section~\ref{sec:obtaining_stationary_distribution}. We also plot the stationary distribution of an $M/M/s$ system with uniform service rate $\mF$ (the \textit{fast} system) and with uniform service rate $\mS$ (the \textit{slow} system). We append the subscripts H, or L to random variables to indicate that they belong to the $M/M/s$ system with high service rate, or low service rate, respectively. We see that the distribution of the slowdown system peaks around the same point as the fast system, but that the tail behavior of the slowdown system is more comparable to the slow system (which can be attributed to the snowball effect and long busy periods). Such a fat tail obviously has severe consequences for performance, and in Figure~\ref{fig:p_marginal} we see for instance that the mean number of customers in the system increases considerably due to slowdown.

\begin{figure}%
\centering%
\includegraphics{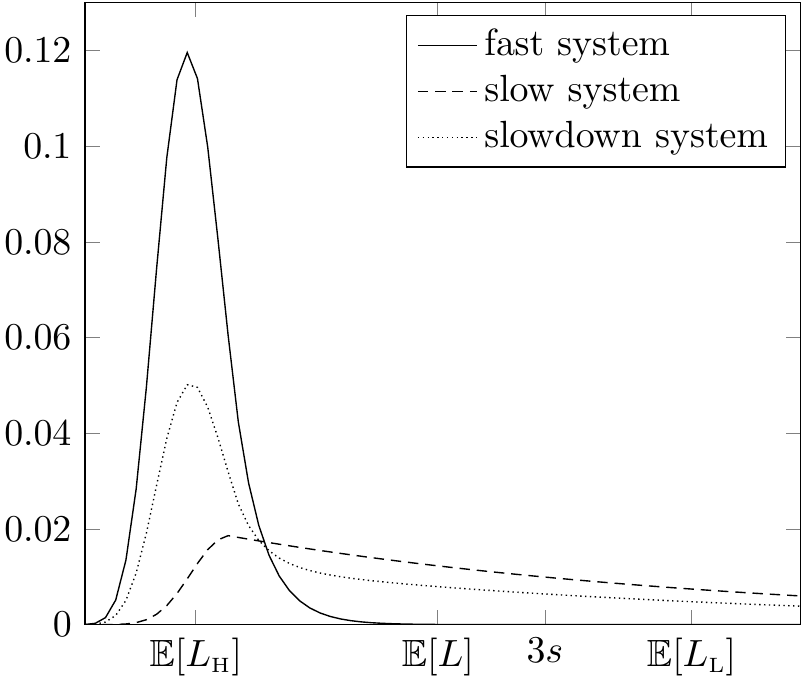}
\caption{Stationary distributions of the total number of customers in the system $L$ for systems with and without slowdown. Parameter values are $s = 15$, $\la = s$, $\rF = 0.7$ and $\rS = 0.98$. The expected number of customers in the systems are $\E{L\F} \approx 10.8$, $\E{L\SL} \approx 59.4$ and $\E{L} \approx 34.5$.}%
\label{fig:p_marginal}%
\end{figure}%



The neglect of slowdown might lead to underprovisioning. Table~\ref{tbl:design_servers} provides an example in which we search for the number of servers $s$ that are required to achieve a certain delay probability. Naturally, the threshold slowdown model requires equally many or more servers as required by the fast system with uniform service rate $\mF$. In particular, differences between the required number of servers in the fast model and the threshold slowdown model seem to increase with the delay probability, with the ratio $\mF/\mS$ and with the arrival rate $\la$.

\begin{table}%
\centering%
\begin{tabular}{ccc|ccc|ccc}%
$\mF$ & $\mS$ & $\la$ & $\Prob{W > 0}$ & $s\F^*$ & $s^*$ & $\Prob{W > 0}$ & $s\F^*$ & $s^*$ \\
\hline \hline%
1 & 0.9 & 10 & 0.1 & 16 & 16 & 0.5 & 12 & 13 \\
  &     & 12 &     & 18 & 18 &     & 14 & 15 \\
  &     & 15 &     & 22 & 22 &     & 18 & 19 \\
  &     & 20 &     & 27 & 28 &     & 23 & 24 \\
\hline%
1 & 0.7 & 10 & 0.1 & 16 & 17 & 0.5 & 12 & 15 \\
  &     & 12 &     & 18 & 19 &     & 14 & 18 \\
  &     & 15 &     & 22 & 23 &     & 18 & 22 \\
  &     & 20 &     & 27 & 30 &     & 23 & 29 \\
\hline%
\end{tabular}%
\caption{Minimal number of servers $s\F^*$ (fast system) and $s^*$ (threshold slowdown system) required to achieve a certain $\Prob{W > 0}$.}%
\label{tbl:design_servers}%
\end{table}%

Another indicator for substantial slowdown effect is the difference $\rho - \rF$, as we will show in the next subsection. This difference is the increase in load caused by the slowdown effect with respect to the load of the fast system.


\subsection{A subtle bistable behavior}%
\label{subsec:bistability}%

The threshold slowdown system behaves as the fast system below the threshold, and as the slow system above the threshold. However, for many relevant parameter settings, neither the fast nor the slow system provides a good approximation for the slowdown system. The reason is that the slowdown system in fact is a rather intricate mixture of both system as will be explained in this subsection.

We start by examining the two-dimensional stationary distribution, which typically consists of two dominant regions: region 1 with only non-delayed customers and no customers waiting, and region 2 with delayed (slowdown) customers in service only and many waiting customers. Region 1 thus complies with the fast system and region 2 with the slow system. An important parameter that determines whether region 1 or region 2 is dominant is $\rF$. A low to moderate $\rF$ makes region 1 dominant, which suggests using the fast system as a proxy. A high load $\rF$ makes region 2 more important, and in fact, when $\rF$ approach 1, the slow system will be a good approximation. See the two examples in Figure~\ref{fig:no_bistability}. Notice here that for a system with a high delay probability, i.e.~Figure~\ref{fig:no_bistability}(b), the increase in load $\rho - \rF$ due to the slowdown effect is small, since both loads $\rF$ and $\rS$ are large and comparable. In contrast, the increase in load in Figure~\ref{fig:no_bistability}(a) is much larger.

\begin{figure}%
\centering%
\subfloat[$\rF = 0.6$, $\rho - \rF = 0.123$ and $\Prob{W > 0} = 0.32$.]{%
\includegraphics{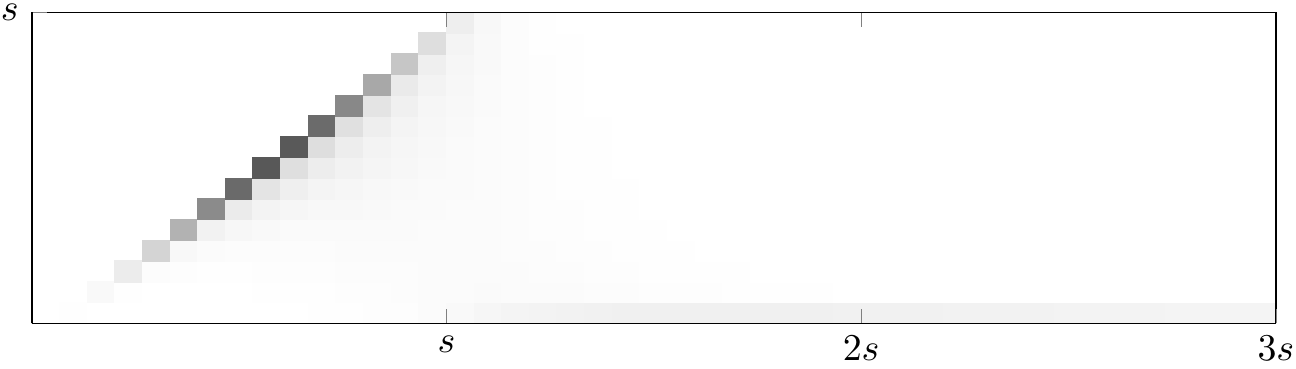}
}\\%
\subfloat[$\rF = 0.95$, $\rho - \rF = 0.027$ and $\Prob{W > 0} = 0.90$.]{%
\includegraphics{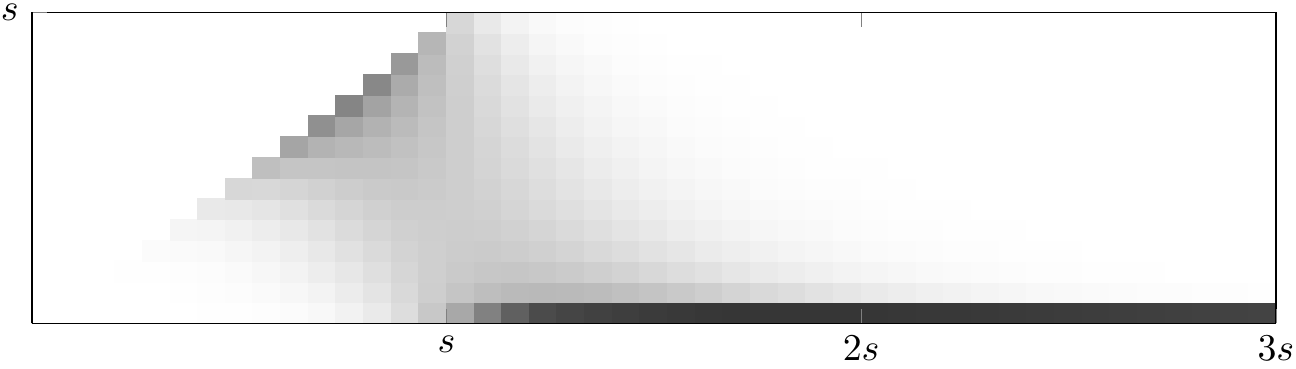}
}%
\caption{For parameter settings that are mild or extreme the slowdown system resembles either the fast or the slow system. The horizontal axis plots the total number of customers in the system and the vertical axis indicates the number of non-delayed customers in service. The contour plot shows where the probability mass is located (darker colour means more mass). Parameter values are $s = 15$ and $\rS = 0.98$.}%
\label{fig:no_bistability}%
\end{figure}%

Arguably the most natural scenario, when $\rF$ is high but not extremely high, say $\rF \in (0.7,0.9)$, gives a less clean picture. Then the slowdown system is a mixture of the fast and slow systems, under the right condition that $\rS$ is decisively larger than $\rF$. A good example is $\rF = 0.8$ and $\rS = 0.98$, as can be seen in Figure~\ref{fig:bistability}(b). By increasing the load $\rS$ busy periods become longer, causing the shift in probability mass towards region 2 and increasing the severity of the slowdown effect in terms of $\rho - \rF$ as is witnessed in Figures~\ref{fig:bistability}(a)-(b).

\begin{figure}%
\centering%
\subfloat[$\rS = 0.9$, $\rho - \rF = 0.047$ and $\Prob{W > 0} = 0.46$.]{
\includegraphics{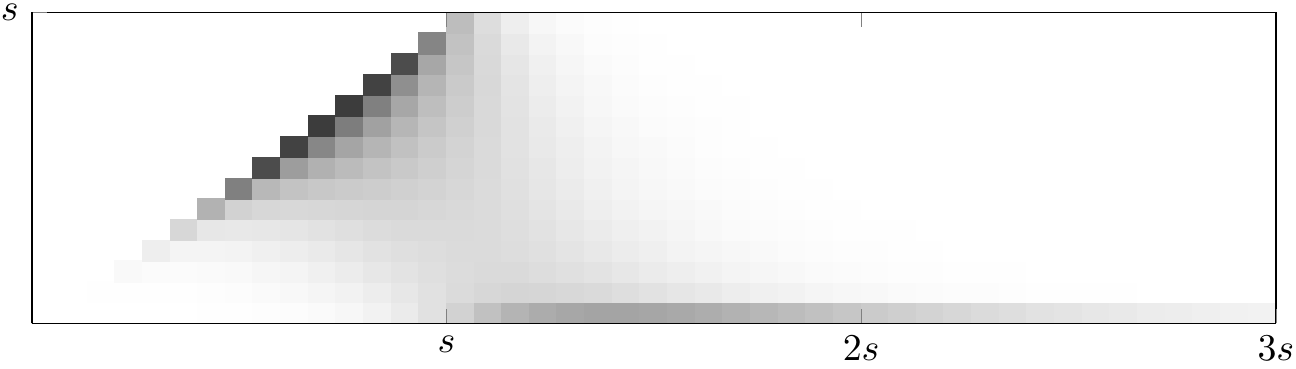}
}\\%
\subfloat[$\rS = 0.98$, $\rho - \rF = 0.144$ and $\Prob{W > 0} = 0.80$.]{
\includegraphics{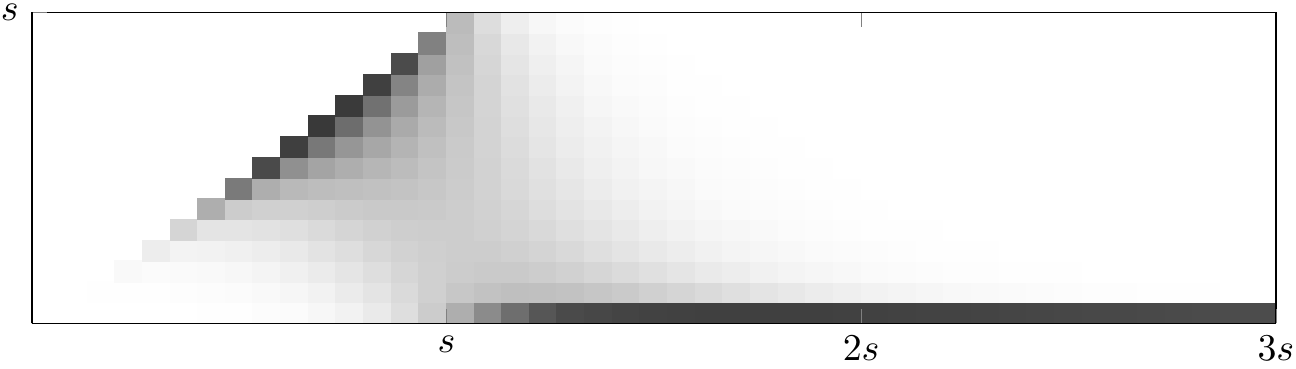}
}%
\caption{Two dominant regions in the stationary distribution become visible when the load of the delayed customers $\rS$ increases. Parameter values are $s = 15$ and $\rF = 0.8$.}%
\label{fig:bistability}%
\end{figure}%

Our slowdown system thus has a subtle bistable behavior, which rises to the surface when both $\rF$ and $\rS - \rF$ are substantial but not extreme. A more extreme bistability effect would occur when $\rS$ could become larger than 1. We therefore next discuss two extensions of our model that allow for $\rS \ge 1$:
\begin{enumerate}[label = \textup{(\roman*)}]%
\item A threshold slowdown system with a finite waiting room;
\item A threshold slowdown system with customer abandonments.
\end{enumerate}%

System (i) can have at most $N$ customers in the system and is therefore inherently stable. When $\rS \ge 1$ and $\rF$ is sufficiently small the system will alternate between periods during which the process settles in the high-occupancy states around $N$, and periods in low-occupancy states below $s$. This gives rise to bistable behavior, and for some parameter ranges even leads to a bimodal distribution as seen in Figure~\ref{fig:extensions}(a). This bimodality can be explained by the fact that for $\rS \ge 1$, the process has two clear points of attraction: the state $N$ and the state $\rF s$ where the rate of arriving and departing customers is equal. Note that our original slowdown system has only one point of attraction, because $\rF < \rS < 1$.

System (ii) assumes that waiting customers abandon the system after an exponential time with mean $1/\delta$. Because the total abandonment rate scales linearly with the number of waiting customers, also this system is inherently stable. For $\rS \ge 1$ it has two points of attraction: one below $s$, and one above $s$ precisely where the total rate of arriving customers equals the rate of departing (abandoning and served) customers. For $\rS \ge 1$ this process alternates between the two points of attraction as is shown in Figure~\ref{fig:extensions}(b). This system is closely related to the operator slowdown system with abandoning customers considered in \cite{Slowdown_Dong2014}. In \cite{Slowdown_Dong2014}, the bistability effect was also observed, where the two points of attraction were identified explicitly. Explicitly characterizing the two points of attraction in the customer slowdown model is more difficult due to the two-dimensional nature of the system.

\begin{figure}%
\centering%
\subfloat[A finite buffer that limits the total number of customers in the system to $N$. Here, $s = 81$, $N = 93$ and $\rF = 0.8$.]{%
\includegraphics{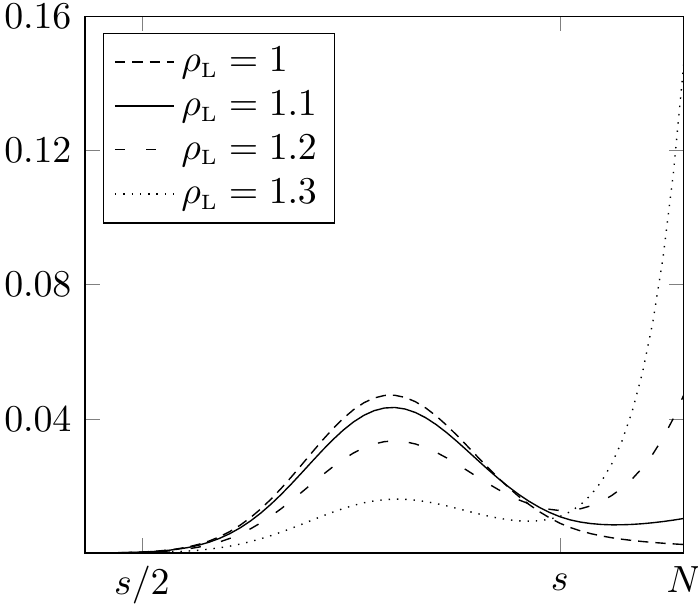}
}%
\subfloat[Allowing customers to abandon the queue with rate $\delta$. Here, $s = 36$, $\rF = 0.7$ and $\rS = 1.2$.]{%
\includegraphics{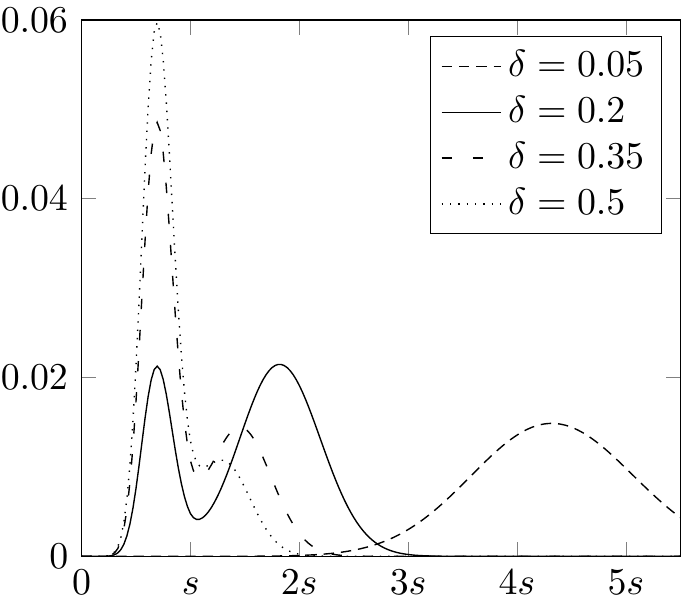}
}%
\caption{Two extensions of the multi-server system with slowdown that exhibit, for a narrow range of parameter values, a bistability effect that is visible in the bimodal marginal distribution of the total number of customers in the system.}%
\label{fig:extensions}%
\end{figure}%



\subsection{Scaling limits}%
\label{subsec:approximations_scaling_limits}%

So far we increase either the number of servers or the arrival rate. We continue by examining a scaling of both parameters at the same time. It is well known in the literature that for $G/M/s$ queues, one should scale the arrival rate or the number of servers such that the load $\rho\seq \sim 1 - \QEDl/\sqrt{s}$ as $s \to \infty$ to achieve QED performance \cite{QED_HalfinWhitt1981}. In terms of our model parameters, the scaling is then as follows:
\begin{align}%
\lseq &= s \mS ( 1 - \QEDl / \sqrt{s} ), \label{eqn:lambda_scaling}
\end{align}%
with constant $\QEDl > 0$ and $s > \QEDl^2$ to guarantee a positive arrival rate $\lseq$. By applying \eqref{eqn:lambda_scaling} to our multi-server system with slowdown one finds that we establish so-called Quality-Driven (QD) performance. QD performance refers to a very high quality of service, e.g.~the probability of delay goes to 0 and many servers are idle. This might be undesirable in view of unnecessary operational costs (overdimensioning). The reason for QD performance is that since $\mF > \mS$ we have $\rF < 1$ in the limit for $s \to \infty$. This ensures that the system stabilizes around a state with relatively low occupancy and with only non-delayed customers in service and no customers waiting in the system. To obtain QED system behavior we set the high service rate according to
\begin{align}
\mFseq &= \mS ( 1 + \QEDmF / \sqrt{s} ), \label{eqn:mu_fast_scaling}
\end{align}%
with constant $\QEDmF > 0$. Note that now $\mFseq/\mS \to 1$ for $s \to \infty$ and thus $\rFseq$ also goes to 1. We refer to the combination of \eqref{eqn:lambda_scaling} and \eqref{eqn:mu_fast_scaling} as a QED-type regime. The reason for this choice of scaling becomes clear when we examine the load of the slowdown system with $s$ servers
\begin{equation}%
\rho\seq = \Bigl(1 - \frac{\QEDl}{\sqrt{s}}\Bigr) \frac{1 + \Prob{W\seq > 0}\frac{\QEDmF}{\sqrt{s}}}{1 + \frac{\QEDmF}{\sqrt{s}}}, \label{eqn:QED_load}
\end{equation}%
which shows that $\rho\seq \uparrow 1$ as $s \to \infty$. Compared to the standard scaling of the load in $G/M/s$ queues, the load in the customer slowdown model approaches 1 slower as it is multiplied by the second term in \eqref{eqn:QED_load}. Figure~\ref{fig:QED_p_wait_s} depicts the probability of delay as a function of $s$, which indeed shows that the probability of delay converges to a value in $(0,1)$.

\begin{figure}%
\centering%
\includegraphics{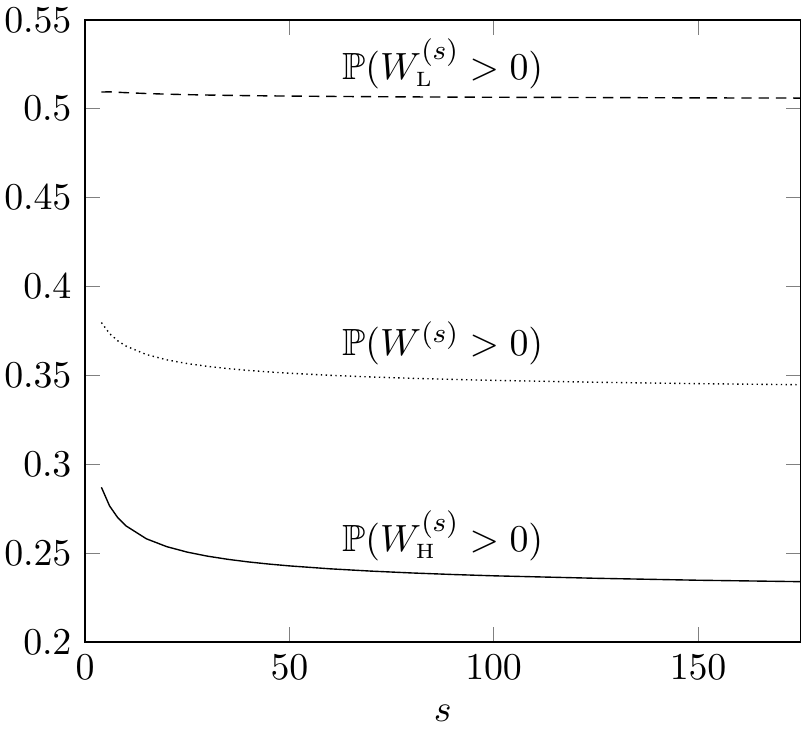}
\caption{Probability to wait for the fast, slow, and customer slowdown system. For all three systems, the scaling \eqref{eqn:lambda_scaling} and \eqref{eqn:mu_fast_scaling} is used with $(\QEDl,\QEDmF) = (0.5,0.5)$.}%
\label{fig:QED_p_wait_s}%
\end{figure}%

Using stochastic coupling techniques, we related the two-dimensional process to two one-dimensional processes that serve as a stochastic lower and upper bound (at the process level in terms of stochastic domination). These two related processes are the fast and the slow system introduced earlier. The bounding processes provide sharp approximations for the two-dimensional process. For both bounding processes, we show that in the QED-type regime, the probability of delay converges to a value strictly in between 0 and 1, and this then also holds for the two-dimensional process. Hence, this provides strong evidence for the existence of a non-trivial stochastic-process limit. Formally establishing the existence and characterizing this stochastic-process limit is a challenging open problem, because the limiting process is likely to be two-dimensional as well, and classical techniques to prove stochastic-process limits \cite{StochasticProcessLimits_Whitt2002} do not seem to carry over easily.

\subsection{Insights}%
\label{subsec:insights}%

Here we summarize the insights obtained in this section.

Customer slowdown of the threshold type leads to severe performance degradation, particularly in heavy traffic. Compared to a system without slowdown, the busy periods are relatively long due to the slowdown of delayed customers that reinforces the persistence of the busy period. We refer to this effect as the snowball effect, which describes the correlated service times when customers are delayed. Further, for a relatively high load $\rF$, we find that the threshold slowdown system is a mixture of the fast and slow systems. This mixture effect is visible in the two-dimensional stationary distribution, where it manifests itself as two dominant regions in terms of probability mass -- a subtle bistable effect. Finally, by using a QED-type scaling for the arrival rate and the fast service rate $\mF$, we have shown that a non-degenerate limit behavior occurs as the number of servers increases.


\section{Model description}%
\label{sec:model}%

Recall that $\X{t} \in \Nat_0$ is the total number of customers in the system at time $t$ and $\Y{t} \in \{0,1,\ldots,s\}$ is the number of non-delayed customers in the system at time $t$. Note that $\X{t} \ge \Y{t}$. Then, $\{(\X{t},\Y{t}), ~ t \ge 0\}$ is an irreducible continuous-time Markov chain with discrete state space $V \cup W$, given by
\begin{equation}%
V = \{(i,j) \mid 0 \le i < s, ~ 0 \le j \le i \}, \quad W = \{(i,j) \mid i \ge s, ~ 0 \le j \le s \}.
\end{equation}%
Recall that we refer to the states with $s$ or less customers in the system, as the \textit{boundary} states. With \textit{inner} states we refer to the states with more than $s$ customers in the system. For an inner state $(i,j)$ with $i > s$, we have the following transition rates:
\begin{itemize}%
\item From $(i,j)$ to $(i+1,j)$ with rate $\la$, $0 \le j \le s$;
\item From $(i,j)$ to $(i-1,j)$ with rate $(s-j) \mS$, $0 \le j \le s$;
\item From $(i,j)$ to $(i-1,j-1)$ with rate $j\mF$, $1 \le j \le s$.
\end{itemize}%
The transition rate diagram of the continuous-time Markov chain is shown in Figure~\ref{fig:trd_abstract}.

Define level $i$ as the set of all states with a total of $i$ customers in the system. Now we have the following alternative description of the transition rates. The matrices $\La_k$ contain the transition rates from level $i$ to level $i + k$ with $i > s$. Let $\I$ be the identity matrix of size $s + 1$. Then the matrices $\La_k$ are given by $\La_1 =  \la \I$,
\begin{align}%
\La_0 = - &\begin{pmatrix}%
\la + s \mS &                     &        &           \\
           & \la + (s - 1)\mS + \mF &        &           \\
           &                     & \ddots &           \\
           &                     &        & \la + s\mF
\end{pmatrix}, \\
\intertext{and}
\La_{-1} = &\begin{pmatrix}%
s \mS &          &        &      &   \\
\mF   & (s - 1)\mS &        &      &   \\
      & 2\mF     & \ddots &      &   \\
      &          & \ddots & \mS  &   \\
      &          &        & s\mF & 0
\end{pmatrix}.%
\end{align}%

By assumption $\rF < \rS$, and we have the following condition for ergodicity of the Markov process.
\begin{lemma}\label{lem:ergodicity_condition}%
The Markov process is ergodic if and only if
\begin{equation}%
\rS < 1. \label{eqn:stability_condition}
\end{equation}%
\end{lemma}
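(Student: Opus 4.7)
The QBD structure in the inner region $\{X > s\}$, with level-independent blocks $\La_{-1}, \La_0, \La_1$, suggests applying Foster's drift criterion for continuous-time Markov chains. The natural Lyapunov function is $V(i,j) = i$, the total number of customers.

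For the sufficient direction, I would assume $\rS < 1$ and compute the generator acting on $V$. For an inner state $(i,j)$ with $i > s$,
\[
(QV)(i,j) \;=\; \la - (s-j)\mS - j\mF \;=\; (\la - s\mS) - j(\mF - \mS) \;\le\; \la - s\mS,
\]
the inequality using $\mF > \mS$ and $j \ge 0$. Under $\rS < 1$ the right-hand side equals $-s\mS(1 - \rS)$, a strictly negative constant independent of $j$. On the finite boundary region both $V$ and $QV$ are trivially bounded. Foster's theorem then yields positive recurrence.

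For the necessary direction, I would assume $\rS \ge 1$ and exploit the following structural observation: the phase $Y = 0$ is invariant under the inner dynamics. From any state $(i,0)$ with $i \ge s+1$, arrivals move to $(i+1,0)$ since they join the queue as delayed customers, and departures move to $(i-1,0)$ at rate $s\mS$ since only delayed customers are in service. Hence, starting from $(s+1,0)$, the coordinate $X(t)$ behaves as a birth--death chain on $\{s+1, s+2, \ldots\}$ with constant birth rate $\la$ and constant death rate $s\mS$, i.e., an $M/M/1$-type chain, until $X$ first hits level $s$. When $\la \ge s\mS$ this chain has infinite mean hitting time of $s$ from $s+1$ (transient if the inequality is strict, null-recurrent if it is an equality), so the mean return time to $(s+1,0)$ in the two-dimensional chain is infinite and positive recurrence fails.

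The main (minor) obstacle is the rigorous identification of the $Y=0$ slab as invariant under the inner dynamics, which reduces the necessity direction to a textbook $M/M/1$ calculation; once this observation is made explicit, both directions are routine. An alternative route would be Neuts' mean drift condition for QBDs, noting that the phase generator $A = \La_{-1} + \La_0 + \La_1$ has a unique absorbing state at $j=0$ so that its (degenerate) stationary distribution sits on $e_0$, yielding the same drift $\la - s\mS$; I would prefer the Foster approach to avoid treating the reducibility of $A$ separately.
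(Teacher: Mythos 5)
Your proof is correct, but it takes a genuinely different route from the paper's. The paper proves the lemma in one line by invoking Neuts' mean drift condition $\boldsymbol{\pi}\La_1\oneb < \boldsymbol{\pi}\La_{-1}\oneb$, where $\boldsymbol{\pi}$ is the stationary vector of the phase generator $A = \La_{-1}+\La_0+\La_1$; since $A$ is lower bidiagonal with phase $0$ absorbing, $\boldsymbol{\pi}=(1,0,\ldots,0)$ and the condition collapses to $\la < s\mS$, i.e.\ $\rS<1$. You instead prove sufficiency via Foster--Lyapunov with $V(i,j)=i$ (the drift computation $\la - s\mS - j(\mF-\mS)\le -s\mS(1-\rS)$ is correct, and since all transition rates are bounded by $\la+s\mF$ non-explosiveness is automatic), and necessity by observing that the slab $\{Y=0\}$ is closed under the inner dynamics, so that the first-passage time from $(s+1,0)$ down to level $s$ is exactly that of an $M/M/1$ birth--death walk with rates $\la$ and $s\mS$, which has infinite mean when $\rS\ge 1$; by irreducibility this kills positive recurrence. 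Both directions are sound. What your approach buys is that it sidesteps the reducibility of $A$: the textbook statement of Neuts' drift condition assumes an irreducible phase generator, which fails here (phase $0$ is absorbing for $A$), so the paper's one-line argument is formally outside the hypotheses of the cited theorem even though its conclusion is right --- and your absorbing-slab observation is essentially the justification one would supply to repair that. The paper's route is shorter; yours is self-contained, elementary, and more careful on exactly this point.
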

\begin{proof}%
We require that the mean drift in the negative direction is larger than the mean drift in the positive direction; see Neuts' mean drift condition \cite[Theorem~1.7.1]{Classical_Neuts1981}. This gives
\begin{equation}%
\boldsymbol{\pi} \La_1 \oneb < \boldsymbol{\pi} \La_{-1} \oneb,
\end{equation}%
where $\oneb$ is a column vector of ones of size $s + 1$, $\boldsymbol{\pi}$ is the solution of $\boldsymbol{\pi} \sum_{k = -1}^1 \La_k = 0 $ with $\boldsymbol{\pi} \oneb = 1$. We clearly have $\boldsymbol{\pi} = (1,0,\ldots,0)$ and thus the result follows.
\end{proof}%
%


\section{Obtaining the stationary distribution}%
\label{sec:obtaining_stationary_distribution}%

Assume that \eqref{eqn:stability_condition} holds and define the stationary probabilities
\begin{equation}%
p(i,j) \coloneqq \lim_{t \to \infty} \Prob{\X{t} = i, \Y{t} = j}, \quad (i,j) \in V \cup W.
\end{equation}%
The balance equations for the inner states are obtained by equating the rate out of and into an inner state $(i,j)$, yielding, for $i > s, ~ 0 \le j \le s$,
\begin{align}%
(\la + j\mF + (s - j)\mS) p(i,j) &=  \la p(i - 1,j) + (s - j) \mS p(i + 1,j) \notag \\
&\quad + (j + 1) \mF p(i + 1,j + 1), \label{eqn:EE}
\end{align}%
where by convention $p(i,s + 1) = 0$. Equations \eqref{eqn:EE} are referred to as the inner equations. The balance equations for states with $i \le s$ are called the boundary equations.

The stationary probabilities of the inner states are determined using matrix-analytic methods that search for the solution to a non-linear matrix equation. Exploiting structural properties of the Markov process, we derive explicit solutions for these matrices. For similar explicit results using the matrix-analytic methods, see \cite{QBD_Triangular_Houdt2011,QBD_LatticePathCounting_Leeuwaarden2009,QBD_ExplicitR_Leeuwaarden2006}. Next, we solve the boundary equations. Since we want to be able to solve the stationary distribution also for large $s$, solving the $(s + 1)(s + 2)/2$ boundary equations using Gaussian elimination might become computationally cumbersome. We therefore present a more sophisticated algorithm that exploits the structure of the state space and the explicit matrix solution.


\subsection{Inner equations}%
\label{subsec:inner_equations}%

Let $\bld{p}_i = (p(i,0),p(i,1),\ldots,p(i,s))$, and rewrite the inner balance equations as
\begin{equation}%
\bld{p}_{i-1} \La_1 + \bld{p}_i \La_0 + \bld{p}_{i + 1} \La_{-1} = 0, \quad i > s. \label{eqn:EE_vector-matrix_form}
\end{equation}%

The rate matrix $R$ is defined as the minimal non-negative solution of the non-linear matrix equation \cite[Theorem~3.1.1]{Classical_Neuts1981}
\begin{equation}%
\La_1 + R \La_0 + R^2 \La_{-1} = 0. \label{eqn:R_non-linear_matrix_equation}
\end{equation}%
It can be shown that the stationary probabilities satisfy
\begin{equation}%
\bld{p}_i = \bld{p}_s R^{i - s}, \quad i \ge s. \label{eqn:equilibrium_distribution_inner_states}
\end{equation}%
Since the transition matrices are all lower triangular, so is the rate matrix $R$. Denote
\begin{equation}%
R = \begin{pmatrix}%
R_{0,0} &         &        &         \\
R_{1,0} & R_{1,1} &        &         \\
\vdots  &         & \ddots &         \\
R_{s,0} & \cdots  &        & R_{s,s}
\end{pmatrix}%
\end{equation}%
and note that $R^2$ is again a lower triangular matrix with elements $(R^2)_{i,j} = \sum_{k = j}^i R_{i,k} R_{k,j}$ for $i \ge j$.

Equation \eqref{eqn:R_non-linear_matrix_equation} consists of $(s + 1)^2$ separate equations. For the diagonal elements we have
\begin{align}%
\la - (\la + (s - j)\mS + j\mF)R_{j,j} + (s - j)\mS R^2_{j,j} &= 0, \quad 0 \le j < s,  \label{eqn:R_diagonal_elements_j_<_s}\\
\la - (\la + s\mF)R_{s,s} &= 0, \quad j = s, \label{eqn:R_diagonal_elements_j_=_s}
\end{align}%
where $R_{j,j}$ in \eqref{eqn:R_diagonal_elements_j_<_s} is obtained as the minimal non-negative solution. The minimal non-negative solution of \eqref{eqn:R_diagonal_elements_j_<_s} is contained in the interval $(0,1)$, because for $R_{j,j} = 0$ the left-hand side of \eqref{eqn:R_diagonal_elements_j_<_s} is positive, for $R_{j,j} = 1$ the left-hand side of \eqref{eqn:R_diagonal_elements_j_<_s} is negative, and we are dealing with a continuous function. Interestingly, $R_{0,0} = \rS$ and $R_{j,j}$ is monotonically decreasing in $j$. For each element on the subdiagonals we have a linear equation with solution
\begin{equation}%
R_{i,j}  = \frac{\sum_{k = j + 1}^{i - 1} R_{i,k}R_{k,j}(s - j)\mS + \sum_{k = j + 1}^{i} R_{i,k}R_{k,j + 1} (j + 1) \mF}{\la + (s - j)\mS + j\mF - \bigl( R_{i,i}+ R_{j,j} \bigr)(s - j)\mS}, \label{eqn:R_subdiagonal}
\end{equation}%
for $j = i - h, ~ h \le i \le s$ and $h = 1,2,\ldots,s$. In \eqref{eqn:R_subdiagonal} we use the convention that $\sum_{i = i_0}^{i_1} f(i) = 0$ if $i_1 < i_0$. Equations \eqref{eqn:R_diagonal_elements_j_<_s}-\eqref{eqn:R_subdiagonal} fully describe the rate matrix $R$.

Recall that a lower triangular matrix is non-singular if it has all non-zero elements on the diagonal. Thus, the matrix $R$ is non-singular and also $\I - R$ is non-singular. The inverse of $\I - R$ is required to compute the stationary probabilities, as the normalization of the stationary distribution partially follows from $\bld{p}_s (\I + R + R^2 + \cdots) \oneb = \bld{p}_s (\I - R)^{-1} \oneb$. The elements of the inverse are given by
\begin{align}%
((\I - R)^{-1})_{j,j} &= \frac{1}{(\I - R)_{j,j}}, \quad 0 \le j \le s, \\
((\I - R)^{-1})_{i,j} &= \frac{- \sum_{k = j}^{i - 1} (\I - R)_{i,k} ((\I - R)^{-1})_{k,j}}{(\I - R)_{i,i}}, \quad 0 \le j < i \le s.
\end{align}%

Instead of searching for $R$, one can also search for the matrix $G$, defined as the minimal non-negative solution of the non-linear matrix equation
\begin{equation}%
\La_{-1} + \La_0 G + \La_{1} G^2 = 0. \label{eqn:G_non-linear_matrix_equation}
\end{equation}%
The matrices $R$ and $G$ are related as $\La_1 G = R \La_{-1}$ and thus $G = \La_1^{-1} R \La_{-1}$, which exists since $\La_1$ is a diagonal matrix.


\subsection{Boundary equations}%
\label{subsec:boundary_equations}%

The boundary equations can be represented as a set of $(s + 1)(s + 2)/2$ linear equations, which can be solved using Gaussian elimination with an arithmetic complexity of $\BigO(s^6)$ \cite[Chapter~10]{Classical_FraleighBeauregard1995}. By exploiting the structure of the boundary equations one can reduce the arithmetic complexity to $\BigO(s^4)$. In short, we wish to embed the Markov process on level $s$ for which we need the return probabilities when jumping to a higher level (the matrix $G$), combined with the return probabilities when jumping to a level below (yet to be determined). This allows us to compute the non-normalized stationary probabilities of the states in level $s$. Then, we recursively compute the remaining boundary probabilities starting from level $s - 1$, working our way down to level 0, leading to a non-normalized stationary distribution. Finally, the normalized stationary distribution follows by summing over all states and dividing the non-normalized version of the stationary distribution by the resulting sum.

To this end we introduce two first passage probabilities. To aid the derivation of these probabilities we introduce subsets of $V$, indexed by a state $(k,l) \in  V$. We identify the triangular set of states $T_{(k,l)}$ to the South-West of the state $(k,l)$, specifically, $T_{(k,l)} \coloneqq \{ (i,j) \mid k - l \le i \le k - 1, ~ 0 \le j \le i - (k - l) \}$, see Figure~\ref{fig:clarification_triangular_subset_V}.

\begin{figure}%
\centering%
\includegraphics{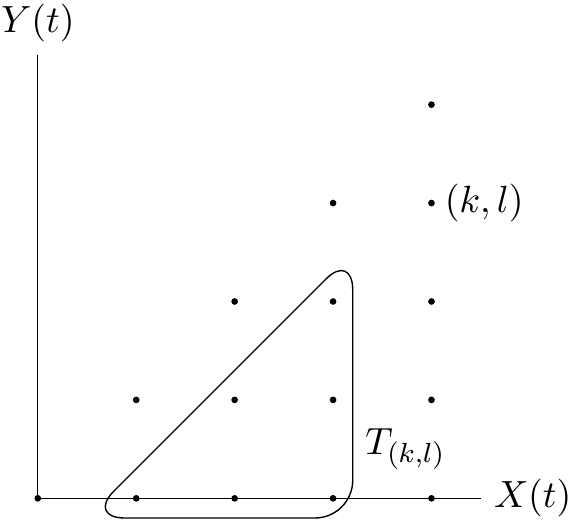}
\caption{A visual clarification of the triangular set of states $T_{(k,l)}$.}%
\label{fig:clarification_triangular_subset_V}%
\end{figure}%

Let $\theta_k(i,j)$ be the first passage probability to phase $j + 1$ in state $(i + 1 - k,j + 1)$, where the Markov process starts in state $(i,j) \in T_{(s,s - k)}$. Note that by phase $j$ we refer to the set of states with $Y(t) = j$. By one-step analysis we obtain
\begin{align}%
\theta_0(i,j) &= \frac{\la \cdot 1 + (i - j)\mS \cdot 0 + j\mF \theta_0(i - 1,j - 1)\theta_0(i,j)}{\la + (i - j)\mS + j\mF}, \quad (i,j) \in T_{(s,s)}, \\
\theta_k(i,j) &= \frac{\la \cdot 0 + (i - j)\mS \theta_{k - 1}(i - 1,j) + j\mF \sum_{l = 0}^k \theta_l(i - 1,j - 1)\theta_{k - l}(i - l,j)}{\la + (i - j)\mS + j\mF}, \notag\\
&\hspace{0.5\linewidth} (i,j) \in T_{(s,s - k)}, ~ k > 0,
\intertext{from which the following expressions can be readily derived,}
\theta_0(i,j) &= \frac{\la}{\la + (i - j)\mS + j\mF(1 - \theta_0(i - 1,j - 1))}, \quad (i,j) \in T_{(s,s)}, \\
\theta_k(i,j) &= \frac{(i - j)\mS\theta_{k - 1}(i - 1,j) + j\mF \sum_{l = 1}^k \theta_{l}(i - 1,j - 1) \theta_{k - l}(i - l,j) }{\la + (i - j)\mS + j\mF(1 - \theta_0(i - 1,j - 1))}, \notag\\
&\hspace{0.5\linewidth} (i,j) \in T_{(s,s - k)}, ~ k > 0.
\end{align}%
Note that $\theta_0(i,i) = 1$, which means that if the Markov process reaches a state on the main diagonal, it eventually always reaches state $(s,s)$.

Let $\psi_{(k,l)}(i,j)$ be the first passage probability to level $k$ in state $(k,l)$, where the Markov process starts from state $(i,j) \in T_{(k,l)}$. We express these first passage probabilities in terms of $\theta_k(i,j)$ as follows
\begin{equation}\label{eqn:first_passage_probability_psi}%
\psi_{(k,l)}(i,j) = \sum_{m = j + 1}^{i + 1} \theta_{i + 1 - m}(i,j) \psi_{(k,l)}(m,j + 1), \quad (i,j) \in T_{(k,l)}.
\end{equation}%
The computation of the first passage probabilities $\psi$ is the most time consuming step in the derivation of the boundary probabilities. Equation \eqref{eqn:first_passage_probability_psi} is evaluated for a total of $s(s + 1)(s + 2)(s + 3)/24$ combinations of $(i,j)$ and $(k,l)$, leading to an arithmetic complexity of $\BigO(s^4)$.

Let $\Psi$ be the matrix of elements $\psi_{(s,k)}(s - 1,j)$ where $j$ is the row number and $k$ the column number. The balance equations of the Markov process embedded at level $s$ are
\begin{equation}\label{eqn:p_s_not_normalized}%
\bld{p}_s \bigl( \La_{-1}\Psi + \La_0 + \La_1 G \bigr) = 0.
\end{equation}%
One solves this homogeneous set of equations using the numerically stable Grassmann version of the Gaussian elimination algorithm \cite{Classical_Grassmann1985} to obtain the stationary probabilities at level $s$. This solution is not normalized.

In order to obtain the remaining boundary probabilities, one embeds the Markov process on the levels $i,i+1,\ldots,s,\ldots$ with $i < s$, for which we derive the following balance equations:
\begin{align}\label{eqn:embedded_level_i_EE}%
&p(i,j)\bigl( \la + (i-j)\mS + j\mF(1-\theta_0(i-1,j-1)) \bigr) \notag \\
&= p(i+1,j)(i+1-j)\mS + p(i+1,j+1)(j+1)\mF \notag \\
&\quad + \sum_{k = 0}^{j - 1} p(i,k) (i-k) \mS \psi_{(i,j)}(i-1,k) + \sum_{k = 1}^{j - 1} p(i,k) k \mF \psi_{(i,j)}(i-1,k-1), \quad j \le i.
\end{align}%
One recursively solves \eqref{eqn:embedded_level_i_EE} by first computing $p(i,0)$, followed by $p(i,1)$, et cetera, until $p(i,i)$ is computed. Then, the remaining boundary probabilities follow by solving \eqref{eqn:embedded_level_i_EE} for $i = s-1,s-2,\ldots,1$. Finally, the probability of having an empty system is found by examining the balance equation in state $(0,0)$, so that
\begin{equation}%
p(0,0) = ( p(1,0) \mS + p(1,1) \mF ) / \la.
\end{equation}%

Recall that the stationary probabilities of level $s$ are not normalized. Thus, the obtained stationary distribution of the boundary and inner states (obtained through \eqref{eqn:equilibrium_distribution_inner_states}) are yet to be normalized. The normalized stationary distribution follows by dividing each non-normalized stationary probability by the sum over all states $\sum_{(i,j) \in V} p(i,j) + \bld{p}_s (\I - R)^{-1} \oneb$.

Using the stationary distribution, one can now obtain performance measures such as the delay probability
\begin{equation}%
\Prob{W > 0} = \sum_{i = 0}^\infty \bld{p}_{s + i} \oneb = \bld{p}_s (\I - R)^{-1} \oneb,
\end{equation}%
or the mean queue length
\begin{equation}%
\E{Q} = \sum_{i = 0}^\infty i \bld{p}_{s + i} \oneb = \bld{p}_s R (\I - R)^{-2} \oneb.
\end{equation}%
%


\subsection{Extensions}%
\label{subsec:extensions_stationary_distribution}%

We next describe how to obtain the stationary distribution of the slowdown model with (i) a finite buffer; or (ii) customer abandonments.


\subsubsection{Finite buffer} %
\label{subsubsec:finite_buffer}%

The transition rate diagram of the slowdown system with a finite buffer is identical to the one shown in Figure~\ref{fig:trd_abstract} but truncated at level $N$. Recall that we defined the matrices $\La_k$ to contain the transition rates from level $i$ to level $i + k$; since we now introduced the finite buffer, we restrict $i$ as $s < i \le N$. At level $N$, the matrix containing the transitions rates to level $N - 1$ remains unchanged and is still $\La_{-1}$. The only difference is that there are no jumps in the positive direction and thus $\La_1$ does not exist and therefore the matrix $\La_0$ changes at level $N$, now indexed by an additional subscript $N$ and given by $\La_{0,N} = \La_0 + \La_1$.

The equilibrium equations in vector-matrix form are given by
\begin{align}%
\bld{p}_{i - 1} \La_1 + \bld{p}_i \La_0 + \bld{p}_{i + 1} \La_{-1} &= 0, \quad s < i < N, \label{eqn:finite_buffer_EE_strip} \\
\bld{p}_{N - 1} \La_1 + \bld{p}_N \La_{0,N} &= 0. \label{eqn:finite_buffer_EE_boundary}
\end{align}%
We now have the following relation, see \cite[Section~2.2]{FiniteStateQBD_Elhafsi2007},
\begin{equation}%
\bld{p}_i = \bld{p}_{i - 1} R_i, \quad s < i \le N, \label{eqn:p_i}
\end{equation}%
where $R_i$ is a level-dependent rate matrix with identical interpretation as the standard rate matrix of the matrix-geometric approach. One can now solve for the rate matrix $R_N$ by manipulating \eqref{eqn:finite_buffer_EE_boundary} as follows
\begin{equation}%
\bld{p}_N  = - \bld{p}_{N - 1} \La_1 \bigl( \La_{0,N} \bigr)^{-1} = \bld{p}_{N - 1} R_N.
\end{equation}%
Note that $\La_{0,N}$ is a diagonal matrix with negative elements and is therefore indeed non-singular. The remaining rate matrices are found from \eqref{eqn:finite_buffer_EE_strip} as
\begin{equation}%
\bld{p}_i = - \bld{p}_{i - 1} \La_1 \Bigl( \La_0 + R_{i + 1} \La_{-1} \Bigr)^{-1} = \bld{p}_{i - 1} R_i, \quad s < i < N.
\end{equation}%
The matrix $\La_0 + R_{i + 1} \La_{-1}$ is lower triangular with negative elements on the diagonal and is therefore non-singular; for the proof of this statement, see \cite[p.~519]{Level-DependentQBD_Bright1995}.

This leaves us to compute $\bld{p}_s$ and the equilibrium probabilities of the boundary states. We do so with the approach we have derived earlier for the slowdown system with an infinite buffer. The missing ingredients are the first passage probabilities from level $s+1$ to level $s$, which are found through the relation
\begin{equation}%
G_{i} = \La_1^{-1} R_{i} \La_{-1}, \quad i > s. \label{eqn:level_dependent_relation_G_and_R}
\end{equation}%
Note that the auxiliary matrices $G_i$ are level-dependent and have the same interpretation as the standard auxiliary matrix in the matrix-analytic approach. Thus, we substitute the level-dependent matrix $G_{s + 1}$ for $G$ in \eqref{eqn:p_s_not_normalized} and are able to compute the complete stationary distribution.


\subsubsection{Customer abandonments}%
\label{subsubsec:customer_abandonments}%

The base model is appended by adding transitions with rate $l\delta$ from state $(s + l,j)$ to state $(s + l - 1,j)$ for $l > 0$. These transitions model a waiting customer abandoning the queue. By appending the base model with these transitions a level-dependent QBD (LDQBD) process is created. We use solution techniques for LDQBD processes as presented in \cite{Level-DependentQBD_Bright1995,Level-DependentQBD_Kharoufeh2011} to compute the stationary distribution of the semi-infinite strip of states and once again use the earlier derived technique to compute the equilibrium distribution of the boundary states. We briefly sketch the solution approach here.

The aggregated abandonment rate depends on the number of customers waiting in the queue. This leads to level-dependent transition rate matrices which we label with an additional subscript $l$, such that $\La_{k,l}$ describes the transition rates from level $s + l$ to level $s + l - k$ with $l > 0$. The transition rate matrices are given by $\La_{1,l} = \La_1$, $\La_{0,l} = \La_0 - l\delta \I$ and $\La_{-1,l} = \La_{-1} + l\delta \I$.

The solution approach is based on the same premise as for the finite QBD process case, namely
\begin{equation}%
\bld{p}_i = \bld{p}_{i - 1} R_i, \quad i > s,
\end{equation}%
where $R_i$ is a level-dependent rate matrix with identical interpretation as the standard rate matrix of the matrix-geometric approach.

The following is explained in greater detail in \cite{Level-DependentQBD_Bright1995}. Since generally only numerical solutions can be found for the $R_i$ matrices of LDQBD processes, one resorts to computing the sequence $\{ R_i \}_{s < i \le N^*}$, where $N^*$ is chosen ``large enough''. By \cite[Lemma~1]{Level-DependentQBD_Bright1995} we have the explicit expression
\begin{equation}%
R_i = \sum_{j = 0}^\infty U_i^j \prod_{k = 0}^{j - 1} D_{i + 2^{j - k}}^{j - 1 - k}, \quad i > s, \label{eqn:abandonments_R_i_explicit}
\end{equation}%
where $U_i^j$ and $D_i^j$ are matrices defined recursively and are a function of the level-dependent transition matrices. Truncating the infinite sum in \eqref{eqn:abandonments_R_i_explicit}, one computes $R_{N^*}$. The remaining rate matrices then follow from the standard relation
\begin{equation}%
R_i = -\La_{1,i - s - 1} \bigl( \La_{0,i - s} + R_{i + 1} \La_{-1,i - s + 1} \bigr)^{-1}, \quad i > s. \label{eqn:abandonments_R_i_recursive}
\end{equation}%
Note that the inverse exists.

As in the finite buffer case, this leaves us to compute $\bld{p}_s$ and the equilibrium probabilities of the boundary states. Once again, the first passage probabilities $G_{s+1}$ are needed and follow from \eqref{eqn:level_dependent_relation_G_and_R}. Then, we substitute the level-dependent matrix $G_{s + 1}$ for $G$ in \eqref{eqn:p_s_not_normalized} and are able to compute the complete stationary distribution.

\section{A QED-type regime}%
\label{sec:scaling limits}%

We next analyze the behavior of the multi-server queueing system incorporating slowdown for large $s$ and $\rS \to 1$ by considering a sequence of queues, indexed by $s$. We write $(\X{t},\Y{t})) = (\Xseq{t},\Yseq{t})$, $\la = \lseq$, $\mF = \mFseq$, and $\rS = \rSseq$. Without loss of generality we keep $\mS$ constant and assume throughout that $\mFseq > \mS$.

Let $\pdseq$ denote the probability that a customer has to wait in a slowdown system with $s$ servers. We will identify a regime in which $\pdseq \to \pd \in (0,1)$ so that the limiting system displays non-degenerate behavior, as in the classical QED regime. In order to do so, we introduce a random variable $\XFseq{t}$ that represents the total number of customers at time $t$ in an $M/M/s$ queue where all customers are served with the high service rate $\mFseq$. As we have done before, we refer to this queueing system as the \textit{fast} system. Let the random variable $\XSseq{t}$ represent the total number of customers at time $t$ in an $M/M/s$ queue where all customers are served with the low service rate $\mS$. We refer to this queueing system as the \textit{slow} system.

For two real-valued random variables $A$ and $B$, we say that $A$ stochastically dominates $B$ if
\begin{equation}%
\Prob{A \le x} \le \Prob{B \le x},
\end{equation}%
and we denote this as $A \succeq B$. The following result is proved in Appendix~\ref{app:proof_stochastic_dominance}.
\begin{lemma}\label{lem:stochastic_dominance}%
$\XSseq{t} \succeq \Xseq{t} \succeq \XFseq{t}.$
\end{lemma}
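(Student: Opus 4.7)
The plan is to construct a sample-path coupling of the three processes on a common probability space under which $\XSseq{t} \ge \Xseq{t} \ge \XFseq{t}$ holds almost surely for every $t \ge 0$; this immediately implies the stated stochastic dominance (together with a routine limiting argument if the processes are considered in stationarity). Since all three systems share the arrival rate $\lseq$ and have total service rates bounded above by $s\mFseq$, the natural device is uniformization at rate $\La = \lseq + s\mFseq$.

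On this common probability space let $\{T_n\}$ be the epochs of a Poisson process of rate $\La$, and let $\{U_n\}$, $\{V_n\}$ be mutually independent i.i.d.\ $\mathrm{Uniform}(0,1)$ sequences. At epoch $T_n$, if $U_n \le \lseq/\La$ all three systems undergo a common arrival; otherwise a potential departure is driven by $V_n$. Writing $X_F$, $X_L$, $X$, $Y$ for the pre-epoch values of $\XFseq{T_n^-}$, $\XSseq{T_n^-}$, $\Xseq{T_n^-}$, and $\Yseq{T_n^-}$, the fast system departs iff $V_n \le p_F \defi \min(X_F, s)/s$, the slow system iff $V_n \le p_S \defi \min(X_L, s)\mS/(s\mFseq)$, and the slowdown system iff $V_n \le p_M \defi (Y\mFseq + (\min(X,s) - Y)\mS)/(s\mFseq)$. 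An extra independent uniform, consulted only when the slowdown system departs, decides whether the departing customer is non-delayed (thus decrementing $Y$) with the appropriate conditional probability; a routine check of infinitesimal rates confirms that each marginal generator is as prescribed by the model.

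Starting from a common initial state, I establish $X_L \ge X \ge X_F$ by induction on the event epochs. Arrivals preserve the inequality, since all three counts jump up by one. At a potential departure the ordering can fail only if two of the processes are equal and the upper one departs while the lower one does not; because a common $V_n$ drives the thinning, this is precluded once one verifies $p_S \le p_M$ whenever $X_L = X$ and $p_M \le p_F$ whenever $X = X_F$. Both inequalities follow directly from $\mS < \mFseq$: for instance, with $X = X_F$ one computes
\begin{equation*}
p_F - p_M = \frac{\min(X,s) - Y}{s} \Bigl( 1 - \frac{\mS}{\mFseq} \Bigr) \ge 0,
\end{equation*}
while for $X_L = X$ one has $p_M - p_S = Y(\mFseq - \mS)/(s\mFseq) \ge 0$ when $X \le s$, and $p_M \ge \mS/\mFseq = p_S$ when $X > s$ since in that case $p_M$ is a convex combination of $\mS/\mFseq$ and $1$.

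The main point requiring care is the joint bookkeeping of the slowdown component $Y$: the conditional choice between delayed and non-delayed departures, combined with the marginal probability $p_M$, must reproduce the prescribed rates $(s-j)\mS$ and $j\mFseq$ at every state $(i,j) \in V \cup W$, including the boundary states where $j = i \le s$ forces $Y$ to decrement upon departure. Once that consistency is verified, the pathwise ordering is preserved at every event epoch and the claim follows.
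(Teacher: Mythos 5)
Your proof is correct, but it takes a genuinely different route from the paper's. The paper also proceeds by coupling, but at the level of individual customers rather than event epochs: it feeds all systems a common arrival stream, couples service requirements so that the $i$-th customer's service time in the slowdown system is either $\BS(i)$ or $(\mS/\mFseq)\,\BS(i)$ according to the congestion met on arrival, and then shows $\WS(i) \ge W(i)$ a.s.\ by induction on the customer index via a contradiction argument (if customer $j+1$ entered service strictly earlier in the slow system, some earlier customer would have to finish strictly earlier there, contradicting $B(i) \le \BS(i)$); the ordering of the queue-length processes is then read off from the ordering of waiting times. Your uniformization coupling instead synchronizes transition epochs and thins departures with a common uniform, reducing the claim to the two one-step inequalities $p_S \le p_M$ on $\{X_L = X\}$ and $p_M \le p_F$ on $\{X = X_F\}$, both of which you verify correctly from $\mS < \mFseq$ and $Y \le \min(X,s)$ (and the failure analysis --- ordering can only break when the upper process is equal to the lower one and departs alone --- is the right one). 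What your approach buys is a purely Markovian, generator-level monotonicity argument; its only delicate point is the one you flag yourself, namely checking that the auxiliary uniform splitting a slowdown departure into delayed versus non-delayed reproduces the rates $j\mFseq$ and $(\min(i,s)-j)\mS$, which is indeed routine and does not interact with the ordering of the counts. What the paper's approach buys is that it never has to verify the marginal dynamics of the two-dimensional chain, since it works directly with customer-level quantities and the fact that a customer's service rate is fixed upon arrival; in exchange it leans on waiting-time (FCFS) semantics rather than on the generator. Both arguments are complete proofs of the lemma.
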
%

We next introduce the scaling
\begin{align}%
\lseq &= s \mS - \QEDl \mS \sqrt{s}, \label{eqn:lambda_scaling_maths_section} \\
\mFseq &= \mS ( 1 + \QEDmF / \sqrt{s} ), \label{eqn:mu_fast_scaling_maths_section}
\end{align}%
with constants $\mS, \QEDl,\QEDmF > 0$ and $s \ge \QEDl^2$. Note that $\mFseq/\mS \to 1$ for $s \to \infty$. We refer to the scaling \eqref{eqn:lambda_scaling_maths_section} and \eqref{eqn:mu_fast_scaling_maths_section} as a QED-type scaling regime. We introduce the scaled random variables
\begin{equation}%
\Dseq{t} \coloneqq \frac{\Xseq{t} - s}{\sqrt{s}}, ~ \DFseq{t} \coloneqq \frac{\XFseq{t} - s}{\sqrt{s}}, ~ \DSseq{t} \coloneqq \frac{\XSseq{t} - s}{\sqrt{s}} . \label{eqn:Xseq_customer_slowdown_scaling}
\end{equation}%
Note that Lemma~\ref{lem:stochastic_dominance} also holds for these scaled random variables, i.e.~$\DSseq{t} \succeq \Dseq{t} \succeq \DFseq{t}$. The following lemma is proved in Appendix~\ref{app:proof_mean_variance_diffusion_processes_bounds_eta_1}.
\begin{lemma}\label{lem:mean_variance_diffusion processes_bounds_eta_1}%
If $\DFseq{0} = d\F\seq$ and $\DF{0} = d\F$ \textup{a.s.} with $d\F\seq \to d\F$, and $\DSseq{0} = d\SL\seq$ and $\DS{0} = d\SL$ \textup{a.s.} with $d\SL\seq \to d\SL$, then for $s \to \infty$, and for every $t \ge 0$, the scaled random variables $\DFseq{t} \dconv \DF{t}$ and $\DSseq{t} \dconv \DS{t}$, where the infinitesimal means of the diffusion processes are given by
\begin{equation}%
\meanDF = \begin{cases}%
\mS(-\QEDl - \QEDmF - x), & x \le 0, \\
\mS(-\QEDl - \QEDmF), & x > 0,
\end{cases} \quad%
\meanDS = \begin{cases}%
\mS ( -\QEDl - x ), & x \le 0, \\
-\QEDl\mS, & x > 0,
\end{cases}%
\end{equation}%
and constant infinitesimal variances $\varDF = \varDS = 2 \mS$.
\end{lemma}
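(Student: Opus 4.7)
The plan is to recognize that both $\XFseq{\cdot}$ and $\XSseq{\cdot}$ are birth-death chains of standard $M/M/s$ type, so that the lemma fits into the classical Halfin-Whitt-type diffusion approximation framework. What is specific to our setting is only the additional service-rate scaling \eqref{eqn:mu_fast_scaling_maths_section} for the fast system, which contributes an extra constant shift $\QEDmF$ to the drift; this will become visible through a direct generator computation. Throughout I write $n = s + x\sqrt{s}$ so that $\Xseq{t} = n$ corresponds to the scaled state $x$; a birth produces a jump of $+1/\sqrt{s}$ in the scaled process and a death a jump of $-1/\sqrt{s}$.

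First I would, for each $M/M/s$ system with service rate $\mu \in \{\mFseq,\mS\}$, write down the infinitesimal mean and variance of the scaled process at scaled state $x$:
\begin{equation*}
m\seq(x) = \frac{\lseq - \min(n,s)\,\mu}{\sqrt{s}}, \qquad (\sigma^2)\seq(x) = \frac{\lseq + \min(n,s)\,\mu}{s}.
\end{equation*}
Splitting into the two cases $\min(n,s) = s$ (for $x > 0$) and $\min(n,s) = n = s + x\sqrt{s}$ (for $x \le 0$), and substituting \eqref{eqn:lambda_scaling_maths_section}--\eqref{eqn:mu_fast_scaling_maths_section}, I obtain for the fast system the drift $-\mS(\QEDl + \QEDmF)$ on $x > 0$ exactly, and $-\mS(\QEDl + \QEDmF + x) - x\mS\QEDmF/\sqrt{s}$ on $x \le 0$, the latter converging to $-\mS(\QEDl + \QEDmF + x)$ uniformly on compacts. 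For the slow system the $\QEDmF$-terms disappear since $\mu = \mS$ is independent of $s$, producing the stated $\meanDS$. In both cases the infinitesimal variance reduces to $2\mS + \BigO(1/\sqrt{s})$ on compact intervals of $x$.

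Second, I would invoke a standard diffusion approximation theorem for birth-death Markov chains (e.g.\ generator convergence in the Ethier-Kurtz sense, or Stone's theorem) to conclude process-level convergence $\DFseq{\cdot} \dconv \DF{\cdot}$ and $\DSseq{\cdot} \dconv \DS{\cdot}$, from which convergence in distribution at each fixed $t$ follows. The hypotheses to check are: (i) convergence of initial conditions, which is assumed; (ii) locally uniform convergence of the pre-limit generators to the differential operator $\mathcal{A} f(x) = m(x) f'(x) + \tfrac{1}{2}\sigma^2(x) f''(x)$ on a core of test functions, which reduces to the computation above together with a second-order Taylor expansion of $f \in C^2_c(\Real)$; and (iii) well-posedness of the martingale problem for $\mathcal{A}$, which holds because the drift $m$ is Lipschitz and $\sigma^2$ is constant.

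The main technical point is that the limiting drift has a kink at $x = 0$, so it is only piecewise $C^1$. Nevertheless, global Lipschitz continuity of $m$ is enough for pathwise uniqueness of the limit diffusion, and the pre-limit generator's dependence on $x$ through $\min(n,s)$ has, after scaling, exactly the same piecewise structure, so generator convergence on $C^2_c(\Real)$ holds without any special treatment at the origin. Tightness of $\{\DFseq{\cdot}\}$ and $\{\DSseq{\cdot}\}$ follows routinely from the uniformly bounded jump size $1/\sqrt{s}$ together with locally bounded total transition rates; combined with generator convergence this yields the claim.
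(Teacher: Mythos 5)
Your proposal is correct and follows essentially the same route as the paper: compute the infinitesimal mean and variance of the scaled birth--death processes under the scalings \eqref{eqn:lambda_scaling_maths_section}--\eqref{eqn:mu_fast_scaling_maths_section}, verify uniform convergence on compacts, and invoke Stone's theorem (the paper cites \cite[Theorem~5.1]{DiffusionProcesses_Iglehart1974}), which already packages the tightness and well-posedness considerations you discuss separately. Your drift and variance computations, including the $-x\mS\QEDmF/\sqrt{s}$ correction term for the fast system on $x \le 0$, agree with the paper's.
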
%
\begin{remark}%
Both processes $\DF{\cdot}$ and $\DS{\cdot}$ behave as an Ornstein-Uhlenbeck process below level zero and a reflected Brownian motion above level zero.
\end{remark}%
\begin{corollary}\label{cor:pdf_diffusion processes_bounds}%
The probability density functions of $\DF{\infty}$ and $\DS{\infty}$ are given by
\begin{equation}%
\pdfF{x} = \begin{cases}%
\pdfCF \frac{\phi(x + \QEDl + \QEDmF)}{\Phi(\QEDl + \QEDmF)}, & x \le 0, \\
(1 - \pdfCF) (\QEDl + \QEDmF)\euler^{-(\QEDl + \QEDmF)x}, & x > 0,
\end{cases}, \quad%
\pdfS{x} = \begin{cases}%
\pdfCS \frac{\phi(x + \QEDl)}{\Phi(\QEDl)}, & x \le 0, \\
(1 - \pdfCS) \QEDl\euler^{-\QEDl x}, & x > 0,
\end{cases}%
\end{equation}%
with
\begin{equation}%
\pdfCF = \frac{\QEDl + \QEDmF}{\QEDl + \QEDmF + \frac{\phi(\QEDl + \QEDmF)}{\Phi(\QEDl + \QEDmF)}}, \quad
\pdfCS = \frac{\QEDl}{\QEDl + \frac{\phi(\QEDl)}{\Phi(\QEDl)}},
\end{equation}%
and $\phi(x)$ and $\Phi(x)$ the probability density function and cumulative density function of the standard normal distribution.
\end{corollary}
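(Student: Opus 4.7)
The plan is to derive each density by solving the stationary forward Kolmogorov (Fokker--Planck) equation for the one-dimensional diffusions $\DF{\cdot}$ and $\DS{\cdot}$ whose infinitesimal means and variances are supplied by Lemma~\ref{lem:mean_variance_diffusion processes_bounds_eta_1}. Since each process has a constant infinitesimal variance $2\mS$ and a drift $m(x)$ that is piecewise affine (linear below $0$, constant above $0$), the stationary density $f$ satisfies
\begin{equation*}
\mS f'(x) - m(x) f(x) = C,
\end{equation*}
and the integration constant $C$ must vanish for any probability density on $\Real$ that decays at $\pm\infty$. This reduces the problem to a separable first-order ODE on each of the half-lines.

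I will solve this ODE piecewise for $\DF{\cdot}$; the computation for $\DS{\cdot}$ is identical with $\QEDmF$ set to $0$ in the drift. On $x\le 0$, where $m(x)/\mS=-(\QEDl+\QEDmF)-x$, integration yields $f(x)\propto\exp\bigl(-(\QEDl+\QEDmF)x-x^2/2\bigr)$, and completing the square in the exponent rewrites this proportional to $\phi(x+\QEDl+\QEDmF)$. On $x>0$, where the drift is the constant $-\mS(\QEDl+\QEDmF)$, integration yields $f(x)\propto \euler^{-(\QEDl+\QEDmF)x}$.

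It then remains to glue the two branches. The density must be continuous at $0$ (the drift is bounded and the diffusion coefficient is nondegenerate, so the stationary density is continuous wherever the coefficients are piecewise continuous), which fixes the ratio of the two proportionality constants. Normalisation $\int_{\Real} f = 1$ then fixes both constants. Writing $\pdfCF\defi\Prob{\DF{\infty}\le 0}$ for the mass on the negative half-line and translating the two proportionality constants into this single parameter gives the two stated branches of $\pdfF{\cdot}$; combining the continuity condition at $0$ with the normalisation identity yields, after a short algebraic manipulation, the closed-form expression
\begin{equation*}
\pdfCF=\frac{\QEDl+\QEDmF}{\QEDl+\QEDmF+\phi(\QEDl+\QEDmF)/\Phi(\QEDl+\QEDmF)}.
\end{equation*}
The same derivation with $\QEDmF=0$ in the drift produces $\pdfS{\cdot}$ and $\pdfCS$.

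There is no serious obstacle: the real content is contained in Lemma~\ref{lem:mean_variance_diffusion processes_bounds_eta_1}, which identifies the two limiting diffusions, and the rest is a textbook stationary-density computation for a one-dimensional diffusion. The only point deserving a little care is the justification that a stationary distribution exists and that $C=0$ in the integrated Fokker--Planck equation; this follows from the fact that the drift is negative and bounded away from $0$ on $x>0$ (so the process is positive recurrent), together with integrability of the resulting density. Everything else — continuity matching at the origin and the algebraic simplification that turns the normalising constant into the form displayed in the corollary — is direct.
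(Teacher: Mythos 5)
Your derivation is correct, and it arrives at exactly the stated densities and constants. The difference from the paper is one of packaging rather than substance: the paper's proof is a one-line appeal to Browne and Whitt's results on piecewise-linear diffusions, whereas you carry out the computation that that reference encapsulates — integrating the stationary Fokker--Planck equation once, arguing the probability flux $C$ vanishes, solving the resulting separable ODE on each half-line (Gaussian branch from the Ornstein--Uhlenbeck drift below $0$ after completing the square, exponential branch from the constant drift above $0$), and then matching by continuity at the origin and normalising. Each step you flag as needing care is indeed the right one to flag and is correctly handled: positive recurrence follows from the drift being negative and bounded away from $0$ on $x>0$, the zero-flux condition follows from decay of the density at $\pm\infty$, and continuity of the stationary density at $0$ holds because the diffusion coefficient is constant and nondegenerate and the drift is bounded (equivalently, the scale-density representation of $\pdfF{x}$ is a continuous function of $x$). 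Your identification $\pdfCF=\Prob{\DF{\infty}\le 0}$ and the algebra giving $\pdfCF=(\QEDl+\QEDmF)\big/\bigl(\QEDl+\QEDmF+\phi(\QEDl+\QEDmF)/\Phi(\QEDl+\QEDmF)\bigr)$ check out, as does the specialisation $\QEDmF=0$ for the slow system. What your route buys is a self-contained proof not resting on the external reference; what the paper's route buys is brevity. No gaps.
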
%
\begin{proof}%
Since we are dealing with piecewise-linear diffusion processes, one computes the probability density functions using \cite[Sections~1 and 4]{DiffusionProcesses_BrowneWhitt1995}.
\end{proof}%
\begin{remark}\label{rem:interchange_of_limits_diffusion_processes}%
The stationary distribution of the diffusion process related to the fast system is equal to the distribution of the limiting random variable of the sequence $(\XFseq{\infty} - s)/\sqrt{s}$ as shown in \cite[Corollary~2]{QED_HalfinWhitt1981}, which establishes that an interchange of limits is allowed. Thus, one can use $\XFseq{\infty} \approx s + \DF{\infty}\sqrt{s}$. The same applies for the slow system.
\end{remark}%
\begin{corollary}\label{cor:bounds_probability_waiting}%
The limiting probability of delay in the slowdown system $\pdseq \to \pd \in (0,1)$ for $s \to \infty$ and can be bounded as follows
\begin{equation}%
\Bigl( 1 + \QEDl \frac{\Phi(\QEDl)}{\phi(\QEDl)} \Bigr)^{-1} = \pdF \ge \pd \ge \pdS = \Bigl( 1 + (\QEDl + \QEDmF) \frac{\Phi(\QEDl+\QEDmF)}{\phi(\QEDl+\QEDmF)} \Bigr)^{-1}.
\end{equation}%
\end{corollary}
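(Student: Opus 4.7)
The plan is to combine the stochastic dominance of Lemma~\ref{lem:stochastic_dominance} with the explicit limiting densities of Corollary~\ref{cor:pdf_diffusion processes_bounds}, invoking the interchange-of-limits result of Remark~\ref{rem:interchange_of_limits_diffusion_processes} on the bounding $M/M/s$ systems.

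First I would translate the stochastic ordering into an ordering of stationary delay probabilities. Because arrivals are Poisson, PASTA gives
\begin{equation*}
\pdFseq = \Prob{\XFseq{\infty} \ge s}, \quad \pdseq = \Prob{\Xseq{\infty} \ge s}, \quad \pdSseq = \Prob{\XSseq{\infty} \ge s},
\end{equation*}
so evaluating the definition of $\succeq$ at $x = s - 1$ yields, for every $s$ in the regime, a sandwich of $\pdseq$ between $\pdFseq$ and $\pdSseq$.

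Next I would take $s \to \infty$ in the two outer terms. Under the scaling \eqref{eqn:lambda_scaling_maths_section}--\eqref{eqn:mu_fast_scaling_maths_section}, the fast system satisfies $1 - \rFseq \sim (\QEDl + \QEDmF)/\sqrt{s}$ and the slow system satisfies $1 - \rSseq \sim \QEDl/\sqrt{s}$, so both are $M/M/s$ queues in the classical Halfin--Whitt regime with scaling parameters $\QEDl + \QEDmF$ and $\QEDl$ respectively. By Remark~\ref{rem:interchange_of_limits_diffusion_processes} the stationary delay probabilities converge to $\Prob{\DF{\infty} > 0}$ and $\Prob{\DS{\infty} > 0}$, which are obtained by integrating the right-tail exponential pieces of the densities in Corollary~\ref{cor:pdf_diffusion processes_bounds}. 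This gives $1 - \pdfCF$ and $1 - \pdfCS$, which after substituting the closed forms for $\pdfCF$ and $\pdfCS$ reduce to the two Halfin--Whitt-type expressions appearing as the boundary values in the statement.

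The main subtlety I would flag is that the sandwich argument alone only shows that $\liminf_s \pdseq$ and $\limsup_s \pdseq$ lie in the interval determined by the two explicit bounds, not that a single limit $\pd$ exists. Since the slowdown system is genuinely two-dimensional and not directly covered by the interchange-of-limits theorem of \cite{QED_HalfinWhitt1981}, a rigorous existence proof would require either a monotone coupling in $s$ that forces $\pdseq$ to be monotone, or tightness of $\Dseq{\infty}$ together with identification of finite-dimensional limits via a direct diffusion-approximation analysis of the two-dimensional process. This is precisely the open problem flagged at the end of Section~\ref{subsec:approximations_scaling_limits} and is the hardest part of the statement; the bounds themselves, however, hold along every convergent subsequence and therefore, conditional on convergence, for $\pd$ itself.
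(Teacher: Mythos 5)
Your proposal is essentially the paper's own proof: the paper likewise computes $\pdF = 1 - \pdfCF$ and $\pdS = 1 - \pdfCS$ by integrating the right-hand tails of the densities in Corollary~\ref{cor:pdf_diffusion processes_bounds} and then invokes Lemma~\ref{lem:stochastic_dominance} to sandwich $\pd$, so your PASTA step and the identification of the Halfin--Whitt parameters $\QEDl+\QEDmF$ and $\QEDl$ merely spell out details the paper leaves implicit. Your caveat is also apt --- the paper's proof, like yours, only bounds the limit points of $\pdseq$ and does not establish that the limit exists (the open problem acknowledged in Section~\ref{subsec:approximations_scaling_limits}); note in passing that the labels $\pdF$ and $\pdS$ in the displayed inequality appear transposed relative to the closed forms, since the fast system (the one with parameter $\QEDl+\QEDmF$) is the stochastically dominated one and hence furnishes the lower bound, exactly as your sandwich gives.
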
%
\begin{proof}%
The limiting probability of delay in the fast system is computed from the distribution of $\DF{\infty}$ as
\begin{equation}%
\pdF = \int_0^\infty \pdfF{x} \,\dinf x = 1 - \pdfCF \in (0,1)
\end{equation}%
and identically for the slow system to get $\pdS = 1 - \pdfCS \in (0,1)$. Using Lemma~\ref{lem:stochastic_dominance} we find that these are lower and upper bounds on the limiting probability to wait $\pd$, respectively.
\end{proof}%


\section{Conclusion}%
\label{sec:conclusion}%

We have studied a threshold slowdown system in a Markovian setting. The threshold slowdown system incorporates a slowdown effect in which customers that are delayed require a longer service time. A delayed customer requiring a longer service time will increase the overall workload in the system, therefore causing longer delays for other customers, who in turn due to slowdown also require a longer service time. We refer to this phenomenon as the snowball effect. The snowball effect has been shown to be the leading cause of a severe performance degradation and the neglect of slowdown might lead to underprovisioning. A subtle bistable behavior is witnessed for slowdown systems with relevant parameter settings: the slowdown system either has only non-delayed customers and no customers waiting, or only delayed customers with many customers waiting, switching between configurations over time. We have introduced a QED-type regime for the slowdown system with many-servers and established non-degenerate limiting behavior. The snowball effect has been shown to persist in this QED-type regime.


\appendix%


\section{Proof of Lemma~\ref{lem:stochastic_dominance}}%
\label{app:proof_stochastic_dominance}%

The proof is based on a coupling argument and follows the same reasoning as \cite[Appendix~B]{Slowdown_Chan2013}. We distinguish between two cases: (i) $\XSseq{t} \succeq \Xseq{t}$; and (ii) $\Xseq{t} \succeq \XFseq{t}$. Recall that for two real-valued random variables $A$ and $B$, we say that $A$ first-order stochastically dominates $B$ if
\begin{equation}%
\Prob{A \le x} \le \Prob{B \le x}.
\end{equation}%

(i) Assume that both queues see a common arrival process. Let the service time for the $i$-th arriving customer in the slow system be $\BS(i)$; the corresponding service time in the slowdown model is then either $B(i) = \BS(i)$ or $B(i) = \mS/\mFseq \BS(i)$ depending on whether the slowdown model has high ($s$ or more customers in the system) or low congestion (less than $s$ customers in the system) upon arrival of the $i$-th customer. Finally, we assume that both systems start empty. Let $W(i)$ and $\WS(i)$ denote the waiting time of the $i$-th arriving customer before beginning service in the slowdown and slow system, respectively. We have the following result.
\begin{lemma}\label{lem:stochastic_dominance_coupling_case_1}%
$\WS(i) \ge W(i)$ \textup{a.s.} for all $i$. Moreover, $\XSseq{t} \ge \Xseq{t}$ \textup{a.s.} for all $t$.
\end{lemma}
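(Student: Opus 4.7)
The plan is a pathwise coupling argument based on the Kiefer-Wolfowitz sorted-workload recursion for $s$-server FCFS queues. The key initial observation is that under the stated coupling, $B(i) \le \BS(i)$ almost surely for every $i$: either $B(i) = \BS(i)$ directly, or $B(i) = (\mS/\mFseq)\BS(i)$, and in the latter branch the factor is strictly less than $1$ since $\mFseq > \mS$. So on every sample path the slowdown service requirements are pointwise dominated by those of the slow system, while the two systems share the arrival epochs $\{A(i)\}$ exactly.

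I would then introduce, for each system, the sorted (ascending) residual-workload vectors $\mathbf{V}(i) = (V_1(i),\ldots,V_s(i))$ for the slowdown system and $\mathbf{V}'(i) = (V'_1(i),\ldots,V'_s(i))$ for the slow system, evaluated just before the $i$-th arrival, and recall the Kiefer-Wolfowitz recursion governing both: add the new service requirement to the smallest coordinate, subtract the common inter-arrival time from every coordinate, truncate at zero, and re-sort. Since both systems start empty, $\mathbf{V}(1) = \mathbf{V}'(1) = \zerob$. An induction on $i$ then yields $V_j(i) \le V'_j(i)$ componentwise for every $j$ and $i$, using three ingredients: (a) $B(i) \le \BS(i)$ to control the smallest coordinate after the service-addition step, (b) common-subtraction and positive-part operations preserve componentwise order, and (c) the elementary fact that if $\mathbf{x} \le \mathbf{y}$ componentwise, then their ascending sorts $\mathbf{x}^{\uparrow}$ and $\mathbf{y}^{\uparrow}$ are also componentwise ordered, which follows from the min-max representation of order statistics.

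Since the waiting time of customer $i$ equals the smallest coordinate of its system's workload vector at the $i$-th arrival epoch, the componentwise ordering immediately gives $W(i) \le \WS(i)$ almost surely, which is the first assertion. For the process-level statement, I would use that the departure time of customer $i$ is $A(i) + W(i) + B(i)$ in the slowdown system and $A(i) + \WS(i) + \BS(i)$ in the slow system; combining the waiting-time bound with $B(i) \le \BS(i)$ shows that each customer departs the slowdown system no later than the slow system. Because both systems share their arrival epochs, every customer still present in the slowdown system at time $t$ is also still present in the slow system at time $t$, yielding $\Xseq{t} \le \XSseq{t}$ almost surely.

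The main subtle point to flag is that $B(i)$ is not a prescribed sequence but depends on the state of the slowdown system itself at time $A(i)$ through the delayed/non-delayed dichotomy; this self-referential feature would complicate an explicit server-by-server coupling, but it is harmless for the workload-vector induction, since the pathwise inequality $B(i) \le \BS(i)$ holds on every sample path regardless of which branch defines $B(i)$. Hence no separate tracking of the delay status of individual customers is needed.
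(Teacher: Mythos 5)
Your proof is correct and rests on exactly the coupling the paper uses (common arrival epochs, both systems starting empty, and the pathwise bound $B(i)\le\BS(i)$ coming from $\mS/\mFseq<1$); the difference lies only in how the induction on the customer index is closed. The paper inducts directly on the waiting times and finishes by contradiction: if $\WS(j+1)<W(j+1)$, then at the instant customer $j+1$ enters service in the slow system at most $s-1$ of the first $j$ customers remain there while at least $s$ remain in the slowdown system, so some earlier customer must have departed the slow system strictly first, contradicting the ordering of departure epochs $A(i)+W(i)+B(i)\le A(i)+\WS(i)+\BS(i)$ furnished by the induction hypothesis together with $B(i)\le\BS(i)$. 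You instead strengthen the induction hypothesis to componentwise domination of the sorted Kiefer--Wolfowitz workload vectors, after which the step closes by pure monotonicity of the recursion (dominated service time added to the smallest coordinate, common subtraction, truncation, re-sorting). Both are sound: yours is more mechanical, avoids the contradiction, and transfers verbatim to any dominated service-time coupling, while the paper's stays at the level of departure epochs and avoids the workload formalism. Your closing observation that only the pathwise inequality $B(i)\le\BS(i)$ --- and not the delay status of individual customers --- enters the induction is precisely the point that also makes the paper's version work, and your passage from ordered departure epochs and shared arrivals to $\Xseq{t}\le\XSseq{t}$ coincides with the paper's final step.
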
%
\begin{proof}%
Let us prove the first statement using induction and fix an arbitrary event in the sample space $\omega \in \Omega$ leading to a sample path of the process. We append the argument $\omega$ to the variables to indicate that we are studying a fixed sample path. Since we start with an empty system, observe that for the first customer we have $\WS(1,\omega) = W(1,\omega) = 0$. Assume that the statement is true for the $j$-th arriving customer and consider the $(j + 1)$-th arriving customer. For the sake of contradiction assume $\WS(j + 1,\omega) < W(j + 1,\omega)$. When customer $j + 1$ starts service in the slow system:
\begin{itemize}%
\item There are at most $s - 1$ customers among the first $j$ arriving customers present in the slow system.
\item At least $s$ customers from among the first $j$ arriving customers are still present in the slowdown system since customer $j + 1$ has not yet started service in the slowdown system.
\end{itemize}%
From these two observations we conclude that there is a customer among the first $j$ arriving customers that finished service strictly earlier in the slow system than in the slowdown system. However, due to the coupling we have $B(i,\omega) \le \BS(i,\omega), ~ i = 1,\ldots,j$ and thus we have a contradiction. We have consequently established that $\WS(i,\omega) \ge W(i,\omega)$ for all $i$. Recall that we fixed an arbitrary event and thus it holds for all $\omega \in \Omega$. The latter statement of the proposition follows immediately.
\end{proof}%
Lemma~\ref{lem:stochastic_dominance_coupling_case_1} indeed shows that $\XSseq{t} \succeq \Xseq{t}$.

(ii) This case follows using the exact same reasoning as for case (i) and we thus omit the proof.


\section{Proof of Lemma~\ref{lem:mean_variance_diffusion processes_bounds_eta_1}}%
\label{app:proof_mean_variance_diffusion_processes_bounds_eta_1}%

The following proof is based on the proof in \cite[Proposition~3.2]{DiffusionProcesses_Janssen2013}. We first describe convergence in distribution for a general sequence of birth--death processes and apply these results to our processes of interest.

Define $A\seq(\cdot)$ as a continuous-time birth--death process with state space $E\seq = \{ a\seq(i) \mid 0 \le i < \infty \}$, where $a\seq(i)$ is increasing in $i$ and $\lim_{i \to \infty} a\seq(i) = a\seq(\infty)$. Then let
\begin{equation}%
e\seq(x) = \underset{i \in \Nat_0}{\argsup}\{a\seq(i) : a\seq(i) \le x \}, \quad x \in [ a\seq(0), a\seq(\infty)).
\end{equation}%
Denote by $\la\seq(a\seq(i))$ and $\mu\seq(a\seq(i))$ the birth--death parameters associated with state $a\seq(i)$. The infinitesimal mean and variance of the process $A\seq(\cdot)$ are given by
\begin{align}%
m\seq(x) &= \la\seq(e\seq(x)) \Bigl( a^{(s)}(e^{(s)}(x) + 1) - a^{(s)}(e^{(s)}(x)) \Bigr) \notag \\
&\quad - \mu^{(s)}(e^{(s)}(x)) \Bigl( a^{(s)}(e^{(s)}(x)) - a\seq(e\seq(x)-1) \Bigr), \label{eqn:infinitesimal_mean_example_process} \\
\bigl( \sigma^2 \bigr)\seq(x) &= \la\seq(e\seq(x)) \Bigl( a\seq(e\seq(x) + 1) - a\seq(e\seq(x)) \Bigr)^2 \notag \\
&\quad + \mu\seq(e\seq(x)) \Bigl( a\seq(e\seq(x)) - a\seq(e\seq(x)-1) \Bigr)^2, \label{eqn:infinitesimal_variance_example_process}
\end{align}%
whenever $x \in [ a\seq(0), a\seq(\infty))$.

Stone's theorem \cite[Theorem~5.1]{DiffusionProcesses_Iglehart1974} then establishes convergence in distribution of the sequence of Markov processes to a limiting diffusion process.
\begin{theorem}[Stone]\label{thm:Stone_convergence_distribution_diffusion_process}%
Let $A\seq(0) = a\seq$ and $A(0) = a$ \textup{a.s.}, with $a\seq \to a$. Then the following two conditions are sufficient for $A\seq \dconv A$ as elements of $D[0,\infty)$\textup{:}
\begin{enumerate}[label = \textup{(\roman*)}]%
\item $E\seq$ becomes dense in $\Real$ as $s \to \infty$;
\item For every compact subinterval $U$ of $\Real$
    \begin{equation}%
    \lim_{s \to \infty} m\seq(x) = m(x), \quad \lim_{s \to \infty} \bigl( \sigma^2 \bigr)\seq(x) = \sigma^2(x),
    \end{equation}%
    uniformly for $x \in U$.
\end{enumerate}%
\end{theorem}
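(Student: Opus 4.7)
The plan is to prove convergence in distribution in $D[0,\infty)$ by showing that the infinitesimal generators of the birth--death processes $A\seq(\cdot)$ converge, on a suitable core of test functions, to the generator of a diffusion with drift $m(\cdot)$ and variance $\sigma^2(\cdot)$, and then invoking a standard Trotter--Kurtz style functional limit theorem. The initial condition $a\seq \to a$ carries over trivially.

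First I would write out the generator of the discrete process: for $f \in C_c^2(\Real)$ and $x \in E\seq$,
\begin{equation*}
L\seq f(x) = \la\seq(e\seq(x))\bigl[ f(a\seq(e\seq(x)+1)) - f(x) \bigr] + \mu\seq(e\seq(x))\bigl[ f(a\seq(e\seq(x)-1)) - f(x) \bigr].
\end{equation*}
A second-order Taylor expansion of $f$ about $x$ and regrouping, using definitions \eqref{eqn:infinitesimal_mean_example_process}--\eqref{eqn:infinitesimal_variance_example_process}, gives
\begin{equation*}
L\seq f(x) = m\seq(x) f'(x) + \tfrac{1}{2} \bigl( \sigma^2 \bigr)\seq(x) f''(x) + R\seq(x),
\end{equation*}
where $R\seq(x)$ is a remainder bounded by $\|f'''\|_\infty$ times $\la\seq$ and $\mu\seq$ multiplied by the cube of the local step size of $E\seq$ at $x$. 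Condition (i) forces this step size to vanish on any compact interval $U$, while condition (ii) controls $\la\seq$ and $\mu\seq$ through the convergence of $m\seq$ and $(\sigma^2)\seq$; together this yields $R\seq \to 0$ uniformly on $U$. Combined with (ii), this gives $L\seq f \to L f$ uniformly on compacts, where
\begin{equation*}
L f(x) = m(x) f'(x) + \tfrac{1}{2} \sigma^2(x) f''(x).
\end{equation*}

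Next I would appeal to a standard generator-convergence result (for example, the Trotter--Kurtz theorem as formulated in Ethier--Kurtz for Markov processes on $\Real$, or equivalently uniqueness of the Stroock--Varadhan martingale problem for $L$): density of $E\seq$ in $\Real$, convergence of initial distributions, and uniform convergence of $L\seq f$ to $L f$ on a core of $L$ (here $C_c^2(\Real)$ suffices under mild local regularity of $m$ and $\sigma^2$) together imply $A\seq \dconv A$ in the Skorokhod space $D[0,\infty)$. Well-posedness of the limiting martingale problem is the ingredient that turns generator convergence into process convergence; it follows from standard criteria under the regularity tacitly assumed by Stone.

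The main obstacle is controlling the Taylor remainder uniformly on compacts: because the grids $E\seq$ need not be uniform, one must combine condition (i), which only states density, with the implicit boundedness of $\la\seq$ and $\mu\seq$ extracted from the convergence of $m\seq$ and $(\sigma^2)\seq$ in condition (ii), to obtain a quantitative vanishing of $R\seq$. A secondary technical point is the behavior at the boundary $a\seq(0)$ (and at $a\seq(\infty)$ if finite): one must check that no mass escapes in the limit, which is handled by the usual tightness argument built into the Trotter--Kurtz machinery, supplemented by a direct verification that test functions of compact support away from the boundary are eventually supported inside the interior of $E\seq$.
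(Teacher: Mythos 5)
The paper offers no proof of this statement to compare against: it is an imported result, quoted with attribution to Stone via \cite[Theorem~5.1]{DiffusionProcesses_Iglehart1974}, and the paper's only obligation is to verify its hypotheses for the fast and slow systems (which it does in Appendix~B). Your generator route --- Taylor-expand the birth--death generator $L\seq f$, identify $m\seq$ and $(\sigma^2)\seq$ as the first- and second-order coefficients, and pass to the limit by Trotter--Kurtz plus uniqueness of the limiting martingale problem --- is therefore a genuinely different route not from ``the paper's proof'' but from Stone's original one, which proceeds through convergence of the scale functions and speed measures that characterize one-dimensional regular processes. The scale-and-speed method buys generality: it needs no continuity or nondegeneracy of $m$ and $\sigma^2$, since the limit is encoded directly by measures, whereas the generator method buys a shorter, more transparent argument that extends beyond dimension one. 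Your combined control of the remainder is correct in the only form available: condition (ii) does not bound $\la\seq$ and $\mu\seq$ separately, but $R\seq(x)$ is dominated by the local mesh times $(\sigma^2)\seq(x)$ times $\|f'''\|_\infty$, and the second factor is bounded on compacts by (ii).

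Two repairs are needed before the sketch is a proof. First, condition (i) as the paper formalizes it (``for every $x$ and $\epsilon > 0$ there exists $s$ with a grid point within $\epsilon$'') is pointwise density and does not force the local step size $a\seq(e\seq(x)+1) - a\seq(e\seq(x))$ to vanish uniformly on compacts, which your remainder estimate requires; you must assume (as Stone effectively does, and as holds here with mesh $1/\sqrt{s}$) that the mesh of $E\seq$ tends to zero locally uniformly. Relatedly, your bound invokes $\|f'''\|_\infty$, so take $f \in C_c^3$ rather than $C_c^2$, or bound the second-order Taylor remainder by the modulus of continuity of $f''$. Second, well-posedness of the limiting martingale problem is the load-bearing step and cannot be dismissed as ``standard criteria'': the theorem as stated imposes nothing on $m$ and $\sigma^2$ beyond being locally uniform limits, and in Stone's intended generality (possibly degenerate or irregular coefficients) the generator argument genuinely fails, which is precisely why Stone worked with scale and speed. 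In one dimension with $\sigma^2$ continuous and strictly positive and $m$ continuous, uniqueness follows from Stroock--Varadhan or Engelbert--Schmidt, and since the paper's application has $\sigma^2 \equiv 2\mS > 0$ and piecewise-linear continuous drift, your argument does cover everything the paper uses the theorem for --- but state that restriction explicitly rather than attributing it tacitly to Stone.
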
%

We first focus on the fast system with scaled process $\DFseq{\cdot}$ and state space $\SSFseq = \{ (i - s)/\sqrt{s} \mid i \in \Nat_0 \}$. Naturally, $\lim_{s \to \infty} \SSFseq$ is dense in $\Real$, by which we mean that $\forall x \in \Real, \forall \epsilon > 0, \exists s > 0$ such that $\inf_{y \in \SSFseq} \vert x - y \vert < \epsilon$, thus satisfying condition (i) of Theorem~\ref{thm:Stone_convergence_distribution_diffusion_process}. Note that for this state space $e\seq(x) = \lfloor s + x \sqrt{s} \rfloor, ~ x \in [-\sqrt{s},\infty)$.

Now, use that for the fast (and slow) system the expression $a\seq(e\seq(x) + 1) - a\seq(e\seq(x))$ in \eqref{eqn:infinitesimal_mean_example_process} and \eqref{eqn:infinitesimal_variance_example_process} is equal to $1/\sqrt{s}$ to obtain the infinitesimal mean and variance of the process $\DFseq{\cdot}$ as
\begin{align}%
\meanDFseq &= \begin{cases}%
\frac{1}{\sqrt{s}} \bigl(\lseq - \lfloor s + x\sqrt{s} \rfloor \mFseq \bigr), & x \le 0, \\
\frac{1}{\sqrt{s}} \bigl(\lseq - s \mFseq \bigr), & x > 0,
\end{cases} \\
\varDFseq &= \begin{cases}%
\frac{1}{s} \bigl(\lseq + \lfloor s + x\sqrt{s} \rfloor \mFseq \bigr), & x \le 0, \\
\frac{1}{s} \bigl(\lseq + s \mFseq \bigr), & x > 0.
\end{cases}%
\end{align}%

We use the scaling \eqref{eqn:lambda_scaling_maths_section} and \eqref{eqn:mu_fast_scaling_maths_section} to obtain
\begin{align}%
\meanDFseq &= \begin{cases}%
\mS \bigl( -\QEDl - \frac{\lfloor s + x\sqrt{s} \rfloor}{s} \QEDmF + \frac{s - \lfloor s + x\sqrt{s} \rfloor}{\sqrt{s}} \bigr), & x \le 0, \\
\mS \bigl( -\QEDl - \QEDmF \bigr), & x > 0,
\end{cases} \\
\varDFseq &= \begin{cases}%
\mS \bigl( 1 + \frac{\lfloor s + x\sqrt{s} \rfloor}{s} - \frac{\QEDl}{\sqrt{s}} + \frac{\lfloor s + x\sqrt{s} \rfloor}{s} \frac{\QEDmF}{\sqrt{s}} \bigr), & x \le 0, \\
\mS \bigl( 2 - \frac{\QEDl - \QEDmF}{\sqrt{s}} \bigr), & x > 0.
\end{cases}%
\end{align}%
By requiring that $\lim_{s \to \infty} \meanDFseq$ and $\lim_{s \to \infty} \varDFseq$ are finite, we indeed find that $\QEDl$ and $\QEDmF$ can be any value larger than 0. For every compact subinterval $U$ of $\Real$, $\lim_{s \to \infty} \meanDFseq = \meanDF$ and $\lim_{s \to \infty} \varDFseq = \varDF$ uniformly for $x \in U$. Thus, condition (ii) of Theorem~\ref{thm:Stone_convergence_distribution_diffusion_process} is satisfied and $\DFseq{t} \dconv \DF{t}$.

Next we turn to the slow system with the associated scaled process $\DSseq{\cdot}$. Its state space is equal to $\SSFseq$ and thus in the limit for $s \to \infty$ also dense in $\Real$. Using the scaling as proposed in \eqref{eqn:lambda_scaling_maths_section} and \eqref{eqn:mu_fast_scaling_maths_section}, the infinitesimal mean and variance of the process $\DSseq{\cdot}$ are
\begin{align}%
\meanDSseq &= \begin{cases}%
\mS \bigl( -\QEDl + \frac{s - \lfloor s + x\sqrt{s} \rfloor}{\sqrt{s}} \bigr), & x \le 0, \\
-\QEDl \mS, & x > 0, \\
\end{cases} \\
\varDSseq &= \begin{cases}%
\mS \bigl( 1 + \frac{\lfloor s + x\sqrt{s} \rfloor}{s} - \frac{\QEDl}{\sqrt{s}} \bigr), & x \le 0, \\
\mS \bigl( 2 - \frac{\QEDl}{\sqrt{s}} \bigr), & x > 0.
\end{cases}%
\end{align}%
For every compact subinterval $U$ of $\Real$, $\lim_{s \to \infty} \meanDSseq = \meanDS$ and $\lim_{s \to \infty} \varDSseq = \varDS$ uniformly for $x \in U$. Again, conditions (i) and (ii) of Theorem~\ref{thm:Stone_convergence_distribution_diffusion_process} are satisfied and $\DSseq{t} \dconv \DS{t}$.


\subsubsection*{Acknowledgement}%

This work was supported by a free competition grant from NWO and an ERC starting grant.


\bibliographystyle{plain}%
\bibliography{CustomerSlowdownBibRevision}%

\begin{thebibliography}{10}

\bibitem{HealthcareQueuingTheory_Armony2011}
M.~Armony, S.~Israelit, A.~Mandelbaum, Y.N. Marmor, Y.~Tseytlin, and G.B.
  Yom-Tov.
\newblock On patient flow in hospitals: {A} data-based queueing-science
  perspective.
\newblock {\em Stochastic Systems}, 5:1--49, 2015.

\bibitem{StateDependent_Ata2006}
B.~Ata and S.~Shneorson.
\newblock Dynamic control of an {$M/M/1$} service system with adjustable
  arrival and service rates.
\newblock {\em Management Science}, 52(11):1778--1791, 2006.

\bibitem{Empirical_Batt2012}
R.J. Batt and C.~Terwiesch.
\newblock Doctors under load: {A}n empirical study of state-dependent service
  times in emergency care.
\newblock Working paper, 2012.

\bibitem{StateDependent_Bekker2006}
R.~Bekker and S.C. Borst.
\newblock Optimal admission control in queues with workload-dependent service
  rates.
\newblock {\em Probability in the Engineering and Informational Sciences},
  20(04):543--570, 2006.

\bibitem{Level-DependentQBD_Bright1995}
L.~Bright and P.G. Taylor.
\newblock Calculating the equilibrium distribution in level dependent
  quasi-birth--and--death processes.
\newblock {\em Stochastic Models}, 11(3):497--525, 1995.

\bibitem{DiffusionProcesses_BrowneWhitt1995}
S.~Browne and W.~Whitt.
\newblock Piecewise-linear diffusion processes.
\newblock In J.H. Dshalalow, editor, {\em Advances in Queueing: Theory,
  Methods, and Open Problems}, volume~4, chapter~18, pages 463--480. CRC Press,
  Boca Raton, FL, 1995.

\bibitem{Empirical_Chalfin2007}
D.B. Chalfin, S.~Trzeciak, A.~Likourezos, B.M. Baumann, and R.P. Dellinger.
\newblock Impact of delayed transfer of critically ill patients from the
  emergency department to the intensive care unit.
\newblock {\em Critical Care Medicine}, 35(6):1477--1483, 2007.

\bibitem{Slowdown_Chan2013}
C.W. Chan, V.F. Farias, and G.~Escobar.
\newblock The impact of delays on service times in the intensive care unit.
\newblock Working paper, 2013.

\bibitem{Empirical_Chan2008}
P.S. Chan, H.M. Krumholz, G.~Nichol, and B.K. Nallamothu.
\newblock Delayed time to defibrillation after in-hospital cardiac arrest.
\newblock {\em New England Journal of Medicine}, 358(1):9--17, 2008.

\bibitem{Slowdown_Dong2014}
J.~Dong, P.~Feldman, and G.~Yom-Tov.
\newblock Service system with slowdowns: {P}otential failures and proposed
  solutions.
\newblock {\em Operations Research}, 62(2):305--324, 2015.

\bibitem{FiniteStateQBD_Elhafsi2007}
E.H. Elhafsi and M.~Molle.
\newblock On the solution to {QBD} processes with finite state space.
\newblock {\em Stochastic Analysis and Applications}, 25(4):763--779, 2007.

\bibitem{Classical_FraleighBeauregard1995}
J.B. Fraleigh and R.A. Beauregard.
\newblock {\em Linear Algebra}.
\newblock Addison-Wesley, 1995.

\bibitem{StateDependent_George2001}
J.M. George and J.M. Harrison.
\newblock Dynamic control of a queue with adjustable service rate.
\newblock {\em Operations Research}, 49(5):720--731, 2001.

\bibitem{Classical_Grassmann1985}
W.K. Grassmann, M.I. Taksar, and D.P. Heyman.
\newblock Regenerative analysis and steady state distributions for {M}arkov
  chains.
\newblock {\em Operations Research}, 33(5):1107--1116, 1985.

\bibitem{HealthcareQueuingTheory_Green2006}
L.~Green.
\newblock Queueing analysis in healthcare.
\newblock In R.~Hall, editor, {\em Patient Flow: {R}educing Delay in Healthcare
  Delivery}, volume 206, chapter~10, pages 281--307. Springer, New York, NY,
  2006.

\bibitem{QED_HalfinWhitt1981}
S.~Halfin and W.~Whitt.
\newblock Heavy-traffic limits for queues with many exponential servers.
\newblock {\em Operations Research}, 29(3):567--588, 1981.

\bibitem{StateDependent_Harris1967}
C.M. Harris.
\newblock Queues with state-dependent stochastic service rates.
\newblock {\em Operations Research}, 15(1):117--130, 1967.

\bibitem{DiffusionProcesses_Iglehart1974}
D.L. Iglehart.
\newblock Weak convergence in applied probability.
\newblock {\em Stochastic Processes and Their Applications}, 2(3):211--241,
  1974.

\bibitem{DiffusionProcesses_Janssen2013}
A.J.E.M. Janssen, J.S.H. van Leeuwaarden, and J.~Sanders.
\newblock Scaled control in the {QED} regime.
\newblock {\em Performance Evaluation}, 70(10):750--769, 2013.

\bibitem{Level-DependentQBD_Kharoufeh2011}
J.P. Kharoufeh.
\newblock Level-dependent quasi-birth--and--death processes.
\newblock In J.J. Cochran, L.A. Cox, P.~Keskinocak, J.P. Kharoufeh, and J.C.
  Smith, editors, {\em Wiley Encyclopedia of Operations Research and Management
  Science}. John Wiley \& Sons, New York, NY, 2011.

\bibitem{Classical_Neuts1981}
M.F. Neuts.
\newblock {\em Matrix-Geometric Solutions in Stochastic Models: An Algorithmic
  Approach}.
\newblock Dover Publications, Mineola, NY, 1994.

\bibitem{Empirical_Renaud2009}
B.~Renaud, A.~Santin, E.~Coma, N.~Camus, D.~van Pelt, J.~Hayon, M.~Gurgui,
  E.~Roupie, J.~Herv{\'e}, M.J. Fine, C.~Brun-Buisson, and J.~Labar{\`e}re.
\newblock Association between timing of intensive care unit admission and
  outcomes for emergency department patients with community-acquired pneumonia.
\newblock {\em Critical Care Medicine}, 37(11):2867--2874, 2009.

\bibitem{Empirical_Richardson2002}
D.B. Richardson.
\newblock The access-block effect: Relationship between delay to reaching an
  inpatient bed and inpatient length of stay.
\newblock {\em The Medical Journal of Australia}, 177(9):492--495, 2002.

\bibitem{Empirical_Siegmeth2005}
A.W. Siegmeth, K.~Gurusamy, and M.J. Parker.
\newblock Delay to surgery prolongs hospital stay in patients with fractures of
  the proximal femur.
\newblock {\em Journal of Bone \& Joint Surgery, British Volume},
  87(8):1123--1126, 2005.

\bibitem{QBD_Triangular_Houdt2011}
B.~Van~Houdt and J.S.H. van Leeuwaarden.
\newblock Triangular {$M/G/1$}-type and tree-like quasi-birth-death {M}arkov
  chains.
\newblock {\em INFORMS Journal on Computing}, 23(1):165--171, 2011.

\bibitem{QBD_LatticePathCounting_Leeuwaarden2009}
J.S.H. van Leeuwaarden, M.S. Squillante, and E.M.M. Winands.
\newblock Quasi-birth--and--death processes, lattice path counting, and
  hypergeometric functions.
\newblock {\em Journal of Applied Probability}, 46(2):507--520, 2009.

\bibitem{QBD_ExplicitR_Leeuwaarden2006}
J.S.H. van Leeuwaarden and E.M.M. Winands.
\newblock Quasi-birth--and--death processes with an explicit rate matrix.
\newblock {\em Stochastic models}, 22(1):77--98, 2006.

\bibitem{StateDependent_Weber1987}
R.R. Weber and S.~Stidham~Jr.
\newblock Optimal control of service rates in networks of queues.
\newblock {\em Advances in Applied Probability}, 19(1):202--218, 1987.

\bibitem{StochasticProcessLimits_Whitt2002}
W.~Whitt.
\newblock {\em Stochastic-Process Limits: An Introduction to Stochastic-Process
  Limits and Their Application to Queues}.
\newblock Springer, New York, NY, 2002.

\end{thebibliography}

\end{document}